\documentclass[12pt,a4paper]{amsart}
\usepackage{amsopn,amssymb,mathrsfs}
\textwidth 15cm \textheight 22cm \hoffset -30pt \pagestyle{plain}

\newtheorem{thm}{Theorem}[section]

\newtheorem{lem}[thm]{Lemma}
\newtheorem{prop}[thm]{Proposition}

\theoremstyle{definition}
\newtheorem{defn}[thm]{Definition}
\newtheorem{example}[thm]{Example}
\newtheorem{rem}[thm]{Remark}


\newcommand{\ball}[1]{\ensuremath{B_{#1}}}

\newcommand{\Ck}[1]{\ensuremath{\mathscr{C}({#1})}}
\newcommand{\closure}[1]{\ensuremath{\overline{{#1}}}}

\newcommand{\czerok}[1]{\ensuremath{c_0({#1})}}

\newcommand{\dind}[1]{\ensuremath{\mbox{\boldmath{$1$}}_{#1}}}
\newcommand{\dual}[1]{\ensuremath{{#1}^*}}

\newcommand{\ind}[1]{\ensuremath{\chi_{#1}}}

\newcommand{\lp}[1]{\ensuremath{\ell_{#1}}}

\newcommand{\lpk}[2]{\ensuremath{\ell_{#1}({#2})}}

\newcommand{\mapping}[3]{\ensuremath{{#1}:{#2}\longrightarrow{#3}}}

\newcommand{\nat}{\mathbb{N}}
\newcommand{\norm}[1]{\ensuremath{\left|\left|{#1}\right|\right|}}
\newcommand{\normdot}{\ensuremath{\left|\left|\cdot\right|\right|}}

\newcommand{\oneton}[2]{\ensuremath{{#1}_1,\ldots,{#1}_{#2}}}

\newcommand{\pnorm}[2]{\ensuremath{\left|\left|{#1}\right|\right|_{#2}}}
\newcommand{\pnormdot}[1]{\ensuremath{\left|\left|\cdot\right|\right|_{#1}}}

\newcommand{\rat}{\mathbb{Q}}
\newcommand{\real}{\mathbb{R}}
\newcommand{\res}[2]{\ensuremath{\res({#1};{#2})}}
\newcommand{\restrict}[1]{\ensuremath{\!\!\upharpoonright_{#1}}}
\newcommand{\setcomp}[2]{\ensuremath{\left\{{#1}\;:\;\,{#2}\right\}}}

\newcommand{\trinorm}[1]{\ensuremath{\left|\left|\left|{#1}\right|\right|\right|}}
\newcommand{\trinormdot}{\ensuremath{\left|\left|\left|\cdot\right|\right|\right|}}

\newcommand{\weakstar}{\ensuremath{w^*}}
\newcommand{\wone}{\ensuremath{\omega_1}}


\DeclareMathOperator{\aspan}{span} %
\DeclareMathOperator{\card}{card} %

\begin{document}

\title{Unconditional bases and strictly convex dual renormings}
\author{R.\ J.\ Smith}
\address{Institute of Mathematics of the AS CR, \v{Z}itn\'{a} 25, CZ - 115 67 Praha 1, Czech Republic}
\email{smith@math.cas.cz}
\author{S.\ Troyanski}
\address{Departamento de Matemáticas, Universidad de Murcia, Campus de Espinardo.\\30100 Murcia, Spain}
\thanks{Supported by MTM 2008-05396/MTM/Fondos FEDER and Fundati\'on S\'eneca
0690/PI/04 CARM and Institute of Mathematics and Informatics
Bulgarian Academy of Sciences, and grant MM-1401/2004 of Bulgarian
NSF}
\email{stroya@um.es}

\subjclass[2000]{46B03; 46B26; 46B15}
\date{November 2008}
\keywords{Strictly convex, norm, unconditional basis}

\begin{abstract}
We present equivalent conditions for a space $X$ with an
unconditional basis to admit an equivalent norm with a strictly
convex dual norm.
\end{abstract}

\maketitle

\section{Introduction}

We say that a norm $\normdot$ on a Banach space $X$ is {\em strictly
convex}, or {\em rotund}, if $x = y$ whenever $\norm{x} = \norm{y} =
\frac{1}{2}\norm{x + y}$. Geometrically, $\normdot$ is strictly
convex if its unit sphere contains no non-trivial line segments. The
norms of many classical spaces fail to possess this property,
however, it is often possible to introduce a new, equivalent norm
that does. Therefore we seek verifiable conditions that allow us to
determine when such a {\em renorming} is possible.

While the notion of strict convexity has been established now for
several decades, it has eluded general characterisation. In this
note, we study the class of spaces $X$ with an unconditional basis
(generally uncountable). In section \ref{dualr}, we obtain
equivalent conditions for $X$ to admit an equivalent norm, such that
its dual norm is strictly convex. The tools used are topological.

It should be noted that, in the context of spaces with unconditional
bases, equivalent conditions for the existence of some other types
of norm are known. We say that $\normdot$ is {\em locally uniformly
rotund}, or {\em LUR}, if, given $x$ and $x_n$ in $X$ such that
$\norm{x} = \norm{x_n} = 1$ and $\norm{x + x_n} \rightarrow 2$, we
have $\norm{x - x_n} \rightarrow 0$. Clearly, if $\normdot$ is LUR
then it is also strictly convex. The norm $\normdot$ is said to be
{\em G\^{a}teaux smooth} if, given non-zero $x$, we have
$$
\lim_{t \rightarrow 0} \frac{\norm{x + th} + \norm{x - th} -
2\norm{x}}{t} \;=\; 0.
$$
If, for all non-zero $x$, this limit exists uniformly for $h$ in the
unit sphere of $X$, then $\normdot$ is {\em Fr\'{e}chet smooth}. By
a well known result of \v{S}mulyan (cf \cite[Theorem
I.1.4]{dgz:93}), if the dual norm of $\normdot$ is strictly convex
(respectively LUR) then $\normdot$ is G\^{a}teaux (respectively
Fr\'{e}chet) smooth. The converses do not hold, even up to
renormings. In fact, there exists a space with a Fr\'{e}chet smooth
norm, which does not admit any equivalent norm with a strictly
convex dual norm (cf \cite[Theorem VII.5.2 (ii)]{dgz:93}). However,
in the class of spaces with unconditional bases, we do have
equivalence up to a renorming.

\begin{thm}\label{flurequiv}
Let $X$ have an unconditional basis. Then $X$ admits an equivalent
norm with LUR dual norm whenever $X$ admits an equivalent
Fr\'{e}chet smooth norm.
\end{thm}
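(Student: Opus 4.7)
The hypothesis that $X$ admits a Fr\'{e}chet smooth renorming forces $X$ to be Asplund. Since $X$ has an unconditional basis $(e_\gamma)_{\gamma\in\Gamma}$ and an Asplund space contains no copy of \lp{1}, the basis must be shrinking. Consequently the biorthogonal functionals $(\dual{e_\gamma})_{\gamma\in\Gamma}$ form a boundedly complete unconditional basis of \dual{X}, every $\dual{x}\in\dual{X}$ has countable support with respect to this basis, and the coordinate projections $\dual{P_F}:\dual{X}\to\dual{X}$ (for finite $F\subset\Gamma$) are $\weakstar$-to-$\weakstar$ continuous.

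My plan is to construct an equivalent $\weakstar$-lower semicontinuous LUR norm on \dual{X}; this is precisely what is needed to produce the desired renorming of $X$. The idea is to combine the building blocks developed in section~\ref{dualr} for producing strictly convex dual norms into an expression of the form
$$
\trinorm{\dual{x}}^2 \;=\; \norm{\dual{x}}^2 \,+\, \sum_{n} c_n\, \varphi_n(\dual{x})^2,
$$
where each $\varphi_n$ is a $\weakstar$-lower semicontinuous seminorm derived from the finite-rank $\weakstar$-continuous projections $\dual{P_F}$ together with countable refinements of the support structure, and the scalars $c_n>0$ are chosen so that the sum defines an equivalent norm. Because each summand is a dual object, the resulting $\trinormdot$ is automatically $\weakstar$-lower semicontinuous and hence a dual norm.

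The principal obstacle lies in upgrading the construction from strict convexity, which already follows from the machinery of section~\ref{dualr}, to local uniform rotundity. By convexity, if $\trinorm{\dual{x_n}}, \trinorm{\dual{x}} \to \alpha$ and $\trinorm{\dual{x_n}+\dual{x}} \to 2\alpha$, then the analogous convex-combination identity passes to every summand, yielding the corresponding convergences $\varphi_n(\dual{x_n}) \to \varphi_n(\dual{x})$ and $\varphi_n(\dual{x_n}+\dual{x}) \to 2\varphi_n(\dual{x})$ for each fixed $n$. What remains is to convert this coordinate-wise information into norm convergence $\norm{\dual{x_n}-\dual{x}} \to 0$, and this is precisely where Fr\'{e}chet -- as opposed to G\^{a}teaux -- smoothness enters: via \v{S}mulyan's lemma, Fr\'{e}chet smoothness supplies the uniform control of norm-attaining sequences of functionals needed to pin down the tails of $\dual{x_n}$ with respect to the biorthogonal basis, and hence to complete the LUR verification.
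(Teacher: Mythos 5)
Your first half coincides with the paper's: Fr\'{e}chet smoothness rules out copies of $\lp{1}$, so the unconditional basis is shrinking and the biorthogonal functionals form an unconditional basis of $\dual{X}$; the goal then becomes an equivalent $\weakstar$-lower semicontinuous LUR norm on $\dual{X}$. From that point on, however, there is a genuine gap. You correctly identify the principal obstacle --- upgrading the coordinatewise convergences of the summands $\varphi_n$ to norm convergence $\norm{\dual{x_n}-\dual{x}}\rightarrow 0$ --- and then you do not overcome it. The appeal to \v{S}mulyan's lemma cannot do this job: that lemma relates smoothness of a norm on $X$ to rotundity properties of its \emph{own} dual norm, whereas here you must verify LUR for a newly constructed norm $\trinormdot$ on $\dual{X}$ that bears no relation to the given Fr\'{e}chet smooth norm. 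Once shrinkingness is secured, Fr\'{e}chet smoothness plays no further role in the argument, and no ``uniform control of norm-attaining functionals'' is available or needed. Note also that the Section \ref{dualr} machinery you propose to recycle only produces strictly convex dual norms, never LUR ones, so it cannot serve as the engine here.

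The missing ingredient is the second-named author's 1968 theorem that \emph{any} space with an unconditional basis admits an equivalent LUR norm, realised by the explicit formula recalled in the introduction: $\trinorm{x}^2=\pnorm{Tx}{\mathrm{Day}}^2+\sum_{n}2^{-n}\pnorm{x}{n}^2$, where $T$ maps into $\czerok{\Gamma}$ via the coordinate functionals, Day's norm is LUR on $\czerok{\Gamma}$, and the seminorms $\pnormdot{n}$ measure approximation by vectors supported off finite sets. It is the interplay of these two families of summands --- not smoothness of the original norm --- that converts coordinatewise information into norm convergence. The paper simply applies this construction to $\dual{X}$, using its unconditional basis $(f_\gamma)_{\gamma\in\Gamma}$, and observes that every summand is $\weakstar$-lower semicontinuous, so the resulting LUR norm is a dual norm. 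Without this (or some substitute) concrete construction and its LUR verification, your outline does not constitute a proof.
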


This result has been known since the 1960s. Indeed, in
\cite{troy:68}, the second named author proved that if $X$ has a
unconditional basis then $X$ admits an equivalent LUR norm. Since
\cite{troy:68} is written in Russian and is of limited availability,
for convenience, we define this LUR norm here. Let $\normdot$ be the
original norm on $X$ and $(e_\gamma)_{\gamma \in \Gamma}$ an
unconditional basis with conjugate system $(f_\gamma)_{\gamma \in
\Gamma}$. By a renorming, we may assume that $(e_\gamma)_{\gamma \in
\Gamma}$ is 1-unconditional with respect to $\normdot$. Recall Day's
norm $\pnormdot{\mathrm{Day}}$ on $\lpk{\infty}{\Gamma}$ (cf
\cite[Definition II.7.2]{dgz:93}) and the fact that
$\pnormdot{\mathrm{Day}}$ is LUR when restricted to
$\czerok{\Gamma}$ (cf \cite[Theorem II.7.3]{dgz:93}). Define
$\mapping{T}{X}{\czerok{\Gamma}}$ by $(Tx)(\gamma) = f_\gamma(x)$
for $x \in X$ and $\gamma \in \Gamma$, and set
$$
\trinorm{x}^2 \;=\; \pnorm{Tx}{\mathrm{Day}}^2 + \sum_{n=0}^\infty
2^{-n}\pnorm{x}{n}^2
$$
where
$$
\pnorm{x}{n}^2 \;=\; \sup\setcomp{\norm{\sum_{\gamma \in
\Gamma\setminus A} f_\gamma(x)e_\gamma} + 2\sum_{\gamma \in A}
|f_\gamma(x)|}{A \subseteq \Gamma \mbox{ and }\card{A} \leq n}.
$$
Then $\trinormdot$ is LUR.

\begin{proof}[Proof of Theorem \ref{flurequiv}] Let $X$ have a
1-unconditional basis $(e_\gamma)_{\gamma \in \Gamma}$, with
conjugate system $(f_\gamma)_{\gamma \in \Gamma}$. If $X$ admits an
equivalent Fr\'{e}chet smooth norm then it cannot contain an
isomorphic copy of $\lp{1}$, for $\lp{1}$ admits no such norm (cf
\cite[Corollary II.3.3]{dgz:93}). Hence $(e_\gamma)_{\gamma \in
\Gamma}$ is shrinking (cf \cite[Theorem 1.c.9]{jl:77}), and it
follows that the conjugate system $(f_\gamma)_{\gamma \in \Gamma}$
is an unconditional basis of $\dual{X}$. Now it is a straightforward
matter to verify that the LUR norm $\trinormdot$ defined above, but
on $\dual{X}$, is $\weakstar$-lower semicontinuous and thus the dual
of an equivalent norm on $X$.
\end{proof}

In particular, if $X$ has an unconditional basis then $X$ admits an
equivalent norm with a LUR dual norm if and only if $X$ does not
cannot contain any isomorphic copies of $\lp{1}$. In the dual
strictly convex case, $X$ may contain copies of $\lp{1}$ but, very
roughly speaking, it cannot contain too many of them. We make this
statement clearer in Remark \ref{notmuchl1}. Note also that if
$\Gamma$ is uncountable and $X$ contains an isomorphic copy of
$\lpk{1}{\Gamma}$, then it is known that $X$ cannot admit any
equivalent norm with a strictly convex dual norm. Indeed,
$\lpk{1}{\Gamma}$ does not even admit an equivalent G\^{a}teaux
smooth norm \cite[Theorem 9]{day:55}.

In Section \ref{dualr}, we present our main topological tools, Lemma
\ref{master} and Theorem \ref{adequate}, together with Theorem
\ref{fullcharac}, our characterisation. In section \ref{examples},
we apply the topological tools to examples from the literature.

\section{Strictly convex dual norms}\label{dualr}

Let $\Gamma$ be a set, and suppose that the real Banach space $X$
has a normalised unconditional basis $(e_\gamma)_{\gamma \in
\Gamma}$. We shall identify both $X$ and $\dual{X}$ as vector
sublattices of $\real^\Gamma$ in the natural way, with the pointwise
lattice order. Moreover, both sublattices are {\em ideals}, in the
sense that if $x \in \real^\Gamma$, $y \in X$ and $|x| \leq y$, then
$x \in X$, and similarly for $\dual{X}$. By a renorming, we can
assume that the basis is 1-unconditional, which means that $X$ and
$\dual{X}$ are both Banach lattices. It will be helpful to keep the
lattice structure of $X$ and $\dual{X}$ in mind. It is clear that
the dual norm of $\dual{X}$ is always finer than
$\pnormdot{\infty}$, which we define on $\dual{X}$ in the obvious
way.

We must define some topological concepts. A function $\mapping{d}{Z
\times Z}{[0,\infty)}$ is called a {\em symmetric} if it satisfies
all the axioms for a metric, with the possible exception of the
triangle inequality. Let $(Z,\tau)$ be a regular topological space.
We say that $Z$ is {\em fragmentable} if there exists a metric $d$
on $Z$ with the property that, for any non-empty subset $E \subseteq
Z$ and $\varepsilon > 0$, there exists a $\tau$-open set $U$ such
that $E \cap U$ is non-empty and has $d$-diameter not exceeding
$\varepsilon$. In fact, by \cite[Remark 1.10]{rib:87}, for $Z$ to be
fragmentable, we only require that $d$ is a non-negative function on
$Z \times Z$, with the property that $x = y$ whenever $d(x,y) = 0$.
A family of subsets $\mathscr{N}$ of $Z$ is a {\em network} for $Z$
if, given $x \in U \in \tau$, there exists $N \in \mathscr{N}$ such
that $x \in N \subseteq U$. A family of subsets $\mathscr{F}$ is
called {\em isolated} if $E \cap
\closure{\bigcup\mathscr{F}\backslash\{E\}}$ is empty whenever $E
\in \mathscr{F}$; equivalently, there is some $U \in \tau$ such that
$E \subseteq U$ and $U \cap F$ is empty for all $F \in
\mathscr{F}\backslash\{E\}$. A network $\mathscr{N}$ is called
$\sigma${\em-isolated} if it can be written as $\mathscr{N} =
\bigcup_{n = 1}^\infty \mathscr{N}_n$, where each $\mathscr{N}_n$ is
isolated. We will say that $Z$ is {\em descriptive} if it is compact
and admits a $\sigma$-isolated network. The class of descriptive
compact spaces is broad and encompasses all metrisable compacta, as
well as {\em Eberlein} and {\em Gul'ko} compacta, which we shall
consider later, in Section \ref{examples}. Symmetrics,
$\sigma$-isolated networks and fragmentable and descriptive spaces
have been studied in, for example,
\cite{gru:84,hansell:01,raja:03,rib:87}. We say that $Z$ is a {\em
Gruenhage space} if there exist families $(\mathscr{U}_n)_{n \in
\nat}$ of open sets with the property that given distinct $x,y \in
Z$, there exist $n \in \nat$ and $U \in \mathscr{U}_n$ such that $U
\cap \{x,y\}$ is a singleton, and either $x$ lies in finitely many
$U^\prime \in \mathscr{U}_n$, or $y$ lies in finitely many $U^\prime
\in \mathscr{U}_n$. Gruenhage spaces were introduced in
\cite{gru:87} and have recently found application in the theory of
strictly convex dual norms \cite{smith:08}. Every descriptive
compact space is Gruenhage. We let $\tau_p$ denote the pointwise
topology on $\real^\Gamma$. We will introduce further classes of
compact topological spaces in Section \ref{examples}.

\begin{lem}
\label{master} Let $K \subseteq [0,1]^{\Gamma}$ be $\tau_p$-compact
satisfy $x \wedge y \in K$ whenever $x,y \in K$. Suppose further
that there exists a $\tau_p$-lower semicontinuous function
$\mapping{\rho}{K}{[0,1]}$ such that
\begin{enumerate}
\item[($*$)] \label{rhocondition} if $y < x$ then there exists $\alpha <
\rho(x)$ and an open set $U \ni y$ with the property that if $z \leq
x$ and $z \in U$, then $\rho(z) < \alpha$.
\end{enumerate}
Then we deduce the following
\begin{enumerate}
\item[I.] $K$ is fragmentable;
\item[II.] for all $r \in [0,1]$, $(\rho^{-1}(r),\tau_p)$ has a
$\sigma$-isolated network.
\end{enumerate}
Moreover, if $K \subseteq \{0,1\}^{\Gamma}$, then
\begin{enumerate}
\item[III.] $K$ is descriptive.
\end{enumerate}
\end{lem}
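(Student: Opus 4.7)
The plan is to introduce the symmetric
\[
d(x, y) \;=\; \rho(x) + \rho(y) - 2\rho(x \wedge y) \qquad (x, y \in K),
\]
use it to witness fragmentability in part I, and then refine the fragmentation into $\sigma$-isolated networks for parts II and III. The condition $(*)$ immediately forces $\rho$ to be strictly order-monotone: given $y < x$, taking $z := y$ in $(*)$ (legal since $y \leq x$ and $y \in U$) yields $\rho(y) < \alpha < \rho(x)$. Consequently $d$ is a non-negative symmetric separating points, for $d(x,y) = 0$ forces $\rho(x \wedge y) = \rho(x) = \rho(y)$, and strict monotonicity applied to $x \wedge y \leq x,y$ then gives $x = x \wedge y = y$.

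For I, given non-empty $E \subseteq K$ and $\epsilon > 0$, set $r = \sup_{x \in E} \rho(x)$ and choose $x_0 \in E$ with $\rho(x_0) > r - \epsilon/4$. Since $\wedge$ is $\tau_p$-continuous and $\rho$ is $\tau_p$-lsc, the map $(y_1, y_2) \mapsto \rho(x_0 \wedge y_1 \wedge y_2)$ is $\tau_p$-lsc on $K \times K$, so there exists a $\tau_p$-open neighborhood $V$ of $x_0$ in $K$ with $\rho(x_0 \wedge y_1 \wedge y_2) > \rho(x_0) - \epsilon/4$ whenever $y_1, y_2 \in V$. For $y_1, y_2 \in E \cap V$, monotonicity and $\rho(y_i) \leq r$ yield $d(y_1, y_2) \leq 2r - 2\rho(x_0 \wedge y_1 \wedge y_2) < \epsilon$, so $V$ is a $\tau_p$-open set meeting $E$ (at $x_0$) with $d$-diameter of $E \cap V$ less than $\epsilon$, as required.

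For II, the crucial additional observation is that on $\rho^{-1}(r)$ the symmetric $d$ is $\tau_p$-to-$d$ continuous: if $y_n \to y$ in $\tau_p$ within $\rho^{-1}(r)$, then $\rho(y_n) = \rho(y) = r$ and, by $\tau_p$-lsc of $\rho$ and continuity of $\wedge$ together with the upper bound $\rho(y_n \wedge y) \leq r$ from monotonicity, also $\rho(y_n \wedge y) \to r$, so $d(y_n, y) \to 0$. Thus every $x \in \rho^{-1}(r)$ admits $\tau_p$-open neighborhoods in $K$ whose intersection with $\rho^{-1}(r)$ has arbitrarily small $d$-diameter. For each $n$, I would perform a transfinite slicing inside the compact closure $\overline{\rho^{-1}(r)}$ in $K$: at stage $\alpha$, remove a relatively open slice of $d$-diameter below $1/n$ drawn from this local cover, intersecting at limits. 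Compactness guarantees termination, the slices are pairwise disjoint, and the slicing structure combined with the $\tau_p$-to-$d$ continuity arranges the resulting family $\mathscr{N}_n$ to be isolated in $(\rho^{-1}(r),\tau_p)$; then $\bigcup_n \mathscr{N}_n$ is the required $\sigma$-isolated network. The main obstacle will be verifying isolation rather than mere covering, which is where the $\tau_p$-to-$d$ continuity on $\rho^{-1}(r)$ is indispensable.

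For III, we must work globally on $K$, so the $\tau_p$-to-$d$ continuity exploited in II is no longer available. The compensation is the clopen basis of $\tau_p$ on $\{0,1\}^\Gamma$ given by finite-coordinate cylinders: this allows each fragmentation slice produced by part I to be refined into a disjoint union of clopen pieces, which are automatically isolated. The plan is to stratify $K$ using rational thresholds $\rho^{-1}((q,1])$ (which are $\tau_p$-open by $\tau_p$-lsc of $\rho$), apply the clopen slicing on each stratum so that the resulting slices have controlled $d$-diameter, and amalgamate the countably many resulting isolated clopen families into the $\sigma$-isolated network witnessing descriptiveness of $K$. The principal difficulty is patching the strata coherently without destroying isolation, and this is where the zero-dimensionality of $\{0,1\}^\Gamma$ carries the argument across levels.
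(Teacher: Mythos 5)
Part I of your proposal is correct and is essentially the paper's argument: the paper uses the symmetric $d(x,y)=\max\{\rho(x),\rho(y)\}-\rho(x\wedge y)$ where you use $\rho(x)+\rho(y)-2\rho(x\wedge y)$, but both separate points by the strict monotonicity you correctly extract from ($*$), and your fragmentation estimate is sound.

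Parts II and III contain genuine gaps. For II, producing relatively open slices of small $d$-diameter by transfinite exhaustion is exactly fragmentability, which you already have from I, and fragmentability alone does not yield a $\sigma$-isolated network: the slices $U_\alpha\setminus\bigcup_{\beta<\alpha}U_\beta$ are open only in the residual set, and there is no reason the resulting family should be isolated. The ingredient you are missing is the implication the paper proves by a subsequence argument using ($*$) and sequential compactness: if $\max\{\rho(x),\rho(x_n)\}\to\rho(x)$ and $d(x,x_n)\to 0$, then $x_n\to x$ in $\tau_p$. Together with the relative openness of the balls $B(x,\varepsilon)=\{y:\rho(y)\le\rho(x),\ d(x,y)<\varepsilon\}$ (which is essentially your ``$\tau_p$-to-$d$ continuity''), this shows $d$ \emph{semi-metrises} $\rho^{-1}(r)$, i.e.\ the $d$-balls form a neighbourhood base, and the paper then converts semi-metrisability plus fragmentation by a finer metric into a $\sigma$-isolated network by citing Gruenhage and Raja. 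Your one-directional continuity gives only that $d$-balls are neighbourhoods of their centres, not that they form a base, and you explicitly leave the isolation step unverified; that step is the whole content of II.

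For III the difficulty is sharper: your plan invokes only the fragmentation from I together with zero-dimensionality of $\{0,1\}^\Gamma$, and no such argument can work, because $K=\{\ind{[0,\alpha)}:\alpha\le\omega_1\}\subseteq\{0,1\}^{\omega_1}$ is a $\wedge$-closed, zero-dimensional, fragmentable compact homeomorphic to $[0,\omega_1]$, which is not descriptive (it is not even Gruenhage). Thus any proof of III must use ($*$) beyond fragmentability, and your outline does not say where. Concretely, a pairwise disjoint family of open sets is indeed automatically isolated, but the fragmentation slices are not open, so they cannot in general be refined by disjoint clopen pieces contained in them; zero-dimensionality does not rescue this. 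The paper instead reduces III to II by a cone construction: it forms $M=\{\lambda\ind{A}:\ind{A}\in K,\ \lambda\in[0,2]\}$ with $\sigma(\lambda\ind{A})=\lambda\rho(\ind{A})$, checks that $\sigma$ again satisfies ($*$), applies II to the level set $S=\sigma^{-1}(1)$, and then transfers the $\sigma$-isolated network from $(0,1]\times S$ to $\sigma^{-1}[0,1]\supseteq K$ via Hansell's Theorem 7.2. Some device of this kind, which genuinely re-uses ($*$) on the enlarged space, is needed to complete your argument.
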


\begin{proof}
(I). For $x,y \in K$, set $d(x,y) = \max\{\rho(x),\rho(y)\} - \rho(x
\wedge y)$. Note that ($*$) implies that $\rho$ is strictly
increasing, that is, $\rho(y) < \rho(x)$ whenever $y < x$. Thus, if
$d(x,y) = 0$ then $x = x \wedge y = y$, so $d$ is a symmetric. We
show that it fragments $K$. Indeed, if $M \subseteq K$ is non-empty
and $\varepsilon
> 0$, let $\alpha = \sup_{x \in M}\rho(x)$. Take $x \in M$ such that
$\rho(x) > \alpha - \varepsilon$. By the lower semicontinuity, there
exists an open set $U \ni x$ such that $\rho(y) > \rho(x) -
\varepsilon$ whenever $y \in U$. Moreover, we can assume that $y
\wedge z \in U$ whenever $y,z \in U$. In particular, if $y,z \in M
\cap U$ then $d(y,z) \leq \alpha - \rho(y \wedge z) < \alpha -
(\alpha - 2\varepsilon) = 2\varepsilon$. By \cite[Remark
1.10]{rib:87} mentioned in the preamble to this section, $K$ is
fragmentable.

(II). For $x \in K$ and $\varepsilon > 0$, define
$$
B(x,\varepsilon) \;=\; \setcomp{y \in K}{\rho(y) \leq \rho(x)\mbox{
and }d(x,y) < \varepsilon}.
$$
Since $y \mapsto \rho(x \wedge y)$ is $\tau_p$-lower semicontinuous,
$B(x,\varepsilon)$ is open in $\rho^{-1}[0,\rho(x)]$. We prove that
if the sequence $(x_n)$ satisfies $\max\{\rho(x),\rho(x_n)\}
\rightarrow \rho(x)$ and $d(x,x_n) \rightarrow 0$, then $x_n
\rightarrow x$. Indeed, first suppose that $(x_{n_r})$ is a
subsequence converging to some $y \in K$. We have $\rho(y) \leq
\lim\inf \rho(x_{n_r}) \leq \rho(x)$. We claim that $x \leq y$. For
a contradiction, suppose otherwise. Since $x_{n_r} \rightarrow y$,
we have $x \wedge x_{n_r} \rightarrow x \wedge y < x$. Thus by
($*$), there exists $\alpha < \rho(x)$ and an open set $U \ni x
\wedge y$ such that if $z \leq x$ and $z \in U$ then $\rho(z) <
\alpha$. It follows that
\begin{eqnarray*}
d(x,x_{n_r}) &=& \max\{\rho(x),\rho(x_{n_r})\} - \rho(x \wedge
x_{n_r})\\
&\geq& \rho(x) - \rho(x \wedge x_{n_r})\\
&>& \rho(x) - \alpha\\
&>& 0
\end{eqnarray*}
for large enough $r$, which is a contradiction. This proves our
claim that $x \leq y$, and since $\rho(y) \leq \rho(x)$ and $\rho$
is strictly increasing, we conclude that $y = x$. Being fragmentable
and compact, $K$ is also sequentially compact, so $x_n \rightarrow
x$. Thus, if $x \in U \subseteq \rho^{-1}(r)$, with $U$ open in
$\rho^{-1}(r)$, then there exists $\varepsilon > 0$ such that
$B(x,\varepsilon) \cap \rho^{-1}(r) \subseteq U$. Consequently, $d$
semi-metrises $\rho^{-1}(r)$ for every $r$. Since $K$ is compact, it
is fragmentable by a metric which generates a finer topology than
$\tau_p$ \cite[Corollary 1.11]{rib:87}, and thus each $\rho^{-1}(r)$
is so fragmented. It follows that $\rho^{-1}(r)$ has a
$\sigma$-isolated network by \cite[Theorems 9.8 and 5.11]{gru:84}
and \cite[Lemma 2.2]{raja:03}.

(III). Suppose now that $K \subseteq \{0,1\}^\Gamma$. We can and do
assume that $\rho(K\backslash\{\ind{\varnothing}\}) \subseteq
[\frac{1}{2},1]$. If we define
$$
M = \setcomp{\lambda\ind{A}}{\ind{A} \in K, \lambda \in [0,2]},
$$
$\sigma(\lambda \ind{A}) = \lambda \rho(A)$ for $A \neq
\varnothing$, $\lambda > 0$, and $\sigma(\ind{\varnothing}) = 0$, it
is straightforward to verify that $M$ is $\tau_p$-compact and
$\sigma$ is a $\tau_p$-lower semicontinuous function on $M$. It is
clear that $x \wedge y \in M$ whenever $x,y \in M$. To see that
$\sigma$ satisfies ($*$) too, we take $x = \lambda \ind{A}, y =
\mu\ind{B} \in M$ such that $y < x$. Clearly $A$ is non-empty and
$\lambda > 0$. In addition, we can assume that either (a) $A = B$
and $0 < \mu < \lambda$, or (b) $B \subsetneqq A$ and $\mu \leq
\lambda$. If (a) holds then put $\beta = \frac{1}{2}(\lambda +
\mu)\rho(\ind{A}) < \sigma(x)$, select $\gamma \in A$ and define the
open set $V = \setcomp{z \in M}{0 < z_\gamma < \frac{1}{2}(\lambda +
\mu)}$. If $z = \nu\ind{C} \leq x$ and $z \in V$ then $C \subseteq
A$ and $\nu < \frac{1}{2}(\lambda + \mu)$, giving $\sigma(z) <
\frac{1}{2}(\lambda + \mu)\rho(\ind{A}) = \beta$. If (b) holds then
take $\alpha < \rho(\ind{A})$ and an open neighbourhood $U \subseteq
K$ of $\ind{B}$ with the property that $\rho(\ind{C}) < \alpha$
whenever $\ind{C} \in U$. Let $\beta = \frac{\lambda}{2}(\alpha +
\rho(\ind{A})) < \sigma(x)$, and observe that $V =
\setcomp{\nu\ind{C}}{\ind{C} \in U, \nu < \beta/\alpha}$ is open in
$M$. Moreover, $y \in V$ and, if $z = \nu\ind{C} \leq x$ and $z \in
V$ then $\sigma(z) < \beta$.

By (II), $(S,\tau_p)$ has a $\sigma$-isolated network, where $S =
\sigma^{-1}(1)$. Following \cite[Theorem 7.2]{hansell:01},
$(L,\tau_p)$ also has a $\sigma$-isolated network, where $L =
\sigma^{-1}[0,1]$. We sketch an argument for completeness. The map
$(t,x) \mapsto (t,tx)$ is a homeomorphism of $(0,1] \times S$ and a
subset $E$ of $(0,1] \times L\backslash\{0\}$, so $E$ has a
$\sigma$-isolated network. If $y = \lambda \ind{A} \in
L\backslash\{0\}$ then $x = \frac{1}{\sigma(y)} y =
\frac{1}{\rho(\ind{A})}\ind{A}$ is an element of $M$ because
$\rho(\ind{A}) \geq \frac{1}{2}$. Moreover $x \in S$ and
$(\sigma(y),y) \in E$. Thus $E$ projects onto $L\backslash\{0\}$,
and so $L\backslash\{0\}$ has a $\sigma$-isolated network, again by
the proof of \cite[Theorem 7.2]{hansell:01}. To finish, we simply
adjoin $\{0\}$ to the network. Since $K$ embeds in $L$, we have
proved (III).
\end{proof}

Lemma \ref{master} will be applied first to a specific class of
topological spaces and an associated class of Banach spaces.

\begin{defn}\label{adequatedef}
A family of subsets $\mathscr{A}$ of $\Gamma$ is called {\em
adequate on} $\Gamma$, or simply {\em adequate}, if it satisfies the
following three conditions:
\begin{enumerate}
\item $\{\gamma\} \in \mathscr{A}$ for all $\gamma \in \Gamma$;
\item $B \in \mathscr{A}$ whenever $A \in \mathscr{A}$ and $B \subseteq
A$;
\item\label{compact} if every finite subset of $A \subseteq \Gamma$ is in $\mathscr{A}$
then $A \in \mathscr{A}$.
\end{enumerate}
\end{defn}

Note that we can replace property (\ref{compact}) of Definition
\ref{adequatedef} with
\begin{enumerate}
\item[(3$^\prime$)] $K_{\mathscr{A}}$ is a $\tau_p$-compact subset of $\{0,1\}^\Gamma$, where $K_\mathscr{A} = \setcomp{\ind{A}}{A \in
\mathscr{A}}$.
\end{enumerate}

The set of totally ordered subsets of a partially ordered set $E$ is
adequate on $E$. If $\Gamma$ is a set of consistent sentences in a
first-order theory then the family of consistent subsets of $\Gamma$
is adequate on $\Gamma$. Adequate families were defined in
\cite{tal:79} and have been considered subsequently by several
authors in, for example, \cite{am:93,ls:84}. Given an adequate
family $\mathscr{A}$, we define Banach lattice ideal
$\lp{\mathscr{A}}$ as the set of all $x \in \lpk{\infty}{\Gamma}$
satisfying $\pnorm{x}{\mathscr{A}} < \infty$, where
$\pnorm{x}{\mathscr{A}} = \sup_{A \in \mathscr{A}}
\pnorm{x\restrict{A}}{1}$, where $\pnormdot{1}$ is the standard
1-norm (cf \cite[Definition 2.1]{am:93}). For example, if
$\mathscr{A} = \{\varnothing\} \cup \setcomp{\{\gamma\}}{\gamma \in
\Gamma}$ then $\lp{\mathscr{A}} = \lpk{\infty}{\Gamma}$, and if
$\Gamma \in \mathscr{A}$ then $\lp{\mathscr{A}} = \lpk{1}{\Gamma}$.
It is easy to see that, in general, the standard unit vectors
$(e_\gamma)_{\gamma \in \Gamma}$ form a normalised 1-unconditional
basic sequence in $\lp{\mathscr{A}}$. We set $h_\mathscr{A} =
\closure{\aspan}^{\pnormdot{\mathscr{A}}} (e_\gamma)_{\gamma \in
\Gamma}$ and denote the dual norm on $h_\mathscr{A}^*$ again by
$\pnormdot{\mathscr{A}}$. Given $x \in h_\mathscr{A}$ and $A
\subseteq \Gamma$, we define
$$
\dind{A}(x) \;=\; \sum_{\gamma \in A} x_\gamma
$$
whenever this sum makes sense. It is clear that the functions
$\dind{A}$, $A \in \mathscr{A}$, lie in $h_\mathscr{A}^*$, with
$\pnorm{\dind{A}}{\mathscr{A}} = 1$ whenever $A$ is non-empty. It is
also easy to verify that the map $\pi:\ind{A} \mapsto \dind{A}$ on
$K_{\mathscr{A}}$ is $\tau_p$-$\weakstar$ continuous; in particular,
the image $\pi(K_\mathscr{A})$ is homeomorphic to $K_{\mathscr{A}}$.
The proof of Theorem \ref{adequate} below requires some renorming
results, which we state partially.

\begin{thm}[{\cite{smith:08}}] \label{gru} $\;$
\begin{enumerate}
\item Let $K$ be a Gruenhage compact space. Then $\Ck{K}$ admits a
norm with a strictly convex dual norm.
\item Let $(\ball{\dual{X}},\weakstar)$ be a Gruenhage compact space.
Then $X$ admits a norm with a strictly convex dual norm.
\end{enumerate}
\end{thm}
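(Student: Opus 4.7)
The plan is to prove (1) by constructing directly on $M(K) = \dual{\Ck{K}}$ an equivalent $\weakstar$-lower semicontinuous strictly convex norm; part (2) then follows by running the same construction on $\dual{X}$, with $(\ball{\dual{X}},\weakstar)$ playing the role of $K$.

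I would first fix families $(\mathscr{U}_n)_{n\in\nat}$ of open sets witnessing the Gruenhage property of $K$. To exploit the asymmetric finiteness in that definition, I stratify each $\mathscr{U}_n$ as $\mathscr{U}_n = \bigcup_{k=1}^\infty \mathscr{U}_n^k$, where $\mathscr{U}_n^k$ gathers those $U \in \mathscr{U}_n$ that meet the set $\setcomp{x \in K}{x \mbox{ belongs to at most $k$ members of }\mathscr{U}_n}$. For each pair $(n,k)$ I attach a $\weakstar$-lower semicontinuous seminorm
$$
\phi_{n,k}(\mu) \;=\; \sup\setcomp{\Big(\sum_{i=1}^{m}|\mu|(U_i)^2\Big)^{1/2}}{m \in \nat,\; \oneton{U}{m} \mbox{ pairwise disjoint elements of }\mathscr{U}_n^k},
$$
using that each $\mu \mapsto |\mu|(U)$ is the supremum of absolute values of $\weakstar$-continuous functionals $\mu\mapsto\int f\,\md\mu$ over $f \in \Ck{K}$ with $|f|\leq 1$ and $\supp f \subseteq U$, and hence is a $\weakstar$-lower semicontinuous seminorm on $M(K)$ bounded above by $\pnorm{\mu}{M(K)}$. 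Setting
$$
\trinorm{\mu}^2 \;=\; \pnorm{\mu}{M(K)}^2 + \sum_{n,k=1}^\infty 2^{-n-k}\phi_{n,k}(\mu)^2
$$
then defines, via the standard $\ell_2$-Minkowski argument, an equivalent $\weakstar$-lower semicontinuous norm on $M(K)$, and hence the dual of an equivalent norm on $\Ck{K}$.

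For strict convexity, I would take $\mu,\nu$ with $\trinorm{\mu} = \trinorm{\nu} = \frac{1}{2}\trinorm{\mu+\nu}$ and use the equality case of Minkowski to propagate the equality summand by summand: $\phi_{n,k}(\mu+\nu) = \phi_{n,k}(\mu) + \phi_{n,k}(\nu)$ for every $(n,k)$, together with the corresponding equality for $\pnormdot{M(K)}$. Unpacking the suprema defining the $\phi_{n,k}$ then squeezes $|\mu|$ and $|\nu|$ into agreement on each member of $\bigcup_{n,k}\mathscr{U}_n^k$, and the pointwise separation provided by the Gruenhage property finally collapses this to $\mu = \nu$. This last passage is the step I expect to be the main obstacle: the Gruenhage hypothesis only furnishes \emph{asymmetric} finiteness for pairs of distinct points, so the stratification $\mathscr{U}_n^k$ and the precise shape of the $\phi_{n,k}$ must be calibrated with care to ensure that the extremality of $\mu+\nu$ transports down to sufficiently small pieces of $K$ on which pointwise separation can be deployed. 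For part (2), only cosmetic modifications are needed: build analogous $\weakstar$-lower semicontinuous seminorms on $\dual{X}$ from the $\weakstar$-open Gruenhage sets $U \subseteq \ball{\dual{X}}$ (replacing $|\mu|(U)$ by an appropriate polar-type functional on $\dual{X}$), and combine with the original dual norm in the same $\ell_2$-fashion.
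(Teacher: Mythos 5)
This theorem is not proved in the paper at all: it is stated as an imported result, quoted from \cite{smith:08}, so there is no internal proof to compare against. The relevant benchmark is therefore the argument in that reference, which is considerably more elaborate than your sketch, and against that benchmark your proposal has two genuine gaps.

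First, the step you yourself flag as ``the main obstacle'' is in fact the entire content of the theorem, and the seminorms $\phi_{n,k}$ as you define them do not deliver it. The equality $\phi_{n,k}(\mu+\nu)=\phi_{n,k}(\mu)+\phi_{n,k}(\nu)$, for a supremum of $\ell_2$-sums taken over pairwise disjoint subfamilies, only yields information along (near-)maximizing configurations; it gives no control whatsoever over an individual $U\in\mathscr{U}_n^k$ that never participates in a near-maximizer. Hence you cannot conclude $|\mu|(U)=|\nu|(U)$ for the particular $U$ needed to separate a given pair of points of the support of $|\mu-\nu|$. Overcoming exactly this failure of ``propagation to every member of the family'' is what Day-type and Troyanski-type constructions (decreasing rearrangements, lexicographic weights) are engineered for, and your stratification does not substitute for them. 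Moreover, the stratification itself is off: $\mathscr{U}_n^k$ collects the sets that \emph{meet} $\setcomp{x\in K}{x\mbox{ lies in at most }k\mbox{ members of }\mathscr{U}_n}$, but in the Gruenhage definition the point lying in finitely many members of $\mathscr{U}_n$ need not be the point inside the separating set $U$, so the $U$ witnessing separation of a given pair may belong to no $\mathscr{U}_n^k$, and in any case membership in $\mathscr{U}_n^k$ confers no point-finiteness on the family.

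Second, part (2) is not a cosmetic variant of part (1). For a general dual ball there is no analogue of $\mu\mapsto|\mu|(U)$ attached to a $\weakstar$-open $U\subseteq\ball{\dual{X}}$: the elements of $\dual{X}$ are the \emph{points} of the compact space, not measures on it, and the unnamed ``polar-type functional'' is precisely where the work lies. Routing through the embedding of $X$ into $\Ck{\ball{\dual{X}}}$ only exhibits $\dual{X}$ as a quotient of the corresponding space of measures, and strict convexity is not inherited by quotient norms. In \cite{smith:08} part (2) is obtained by a separate and more delicate argument built on families of $\weakstar$-open half-spaces, in which the asymmetric finiteness of the Gruenhage property is used in an essential, inductive way; neither part of the theorem follows from the other by relabelling.
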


\begin{thm}[{\cite[Theorem 2.6]{motz:06}}]
\label{sclattice} Let $(X,\normdot)$ be a Banach lattice ideal of
$\real^\Gamma$, such that $\pnormdot{\infty} \leq \normdot$. Then
$X$ admits a $\tau_p$-lower semicontinuous, strictly convex norm if
and only if $X$ admits a $\tau_p$-lower semicontinuous norm
$\trinormdot$ satisfying $\trinorm{x} < \trinorm{y}$ whenever $|x| <
|y|$.
\end{thm}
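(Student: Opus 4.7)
The plan is to prove both implications, the non-trivial one being that strict monotonicity implies, after renorming, strict convexity.

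The easier direction ``strictly convex $\Rightarrow$ strictly monotone'' proceeds in two steps. First, I would upgrade the given $\tau_p$-LSC strictly convex norm to an equivalent $\tau_p$-LSC strictly convex \emph{lattice} norm. This is achieved by taking the Minkowski gauge of the $\tau_p$-closed solid convex hull of the original unit ball $B_\normdot$, which stays bounded pointwise thanks to $\pnormdot{\infty} \leq \normdot$, and then averaging quadratically with the original norm to restore strict convexity. Solidity is preserved by convex hulls and $\tau_p$-closures in the lattice $\real^\Gamma$, essentially via a Riesz decomposition argument. Second, a strictly convex lattice norm $\trinormdot$ is automatically strictly monotone: for $0 \leq u < v$, the sandwich $u \leq \tfrac{1}{2}(u + v) \leq v$ combined with lattice monotonicity forces $\trinorm{u} \leq \trinorm{\tfrac{1}{2}(u+v)} \leq \trinorm{v}$, and if $\trinorm{u} = \trinorm{v}$ then strict convexity yields $u = v$, a contradiction. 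Applying this with $u = |x|$ and $v = |y|$ gives $\trinorm{x} < \trinorm{y}$ whenever $|x| < |y|$.

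For the main direction, starting from a $\tau_p$-LSC norm $\trinormdot$ with $\trinorm{x} < \trinorm{y}$ whenever $|x| < |y|$, I would construct a strictly convex $\tau_p$-LSC norm by perturbing $\trinorm{\cdot}^2$ with a convergent quadratic series $\sum_n \lambda_n p_n^2$ of $\tau_p$-continuous seminorms on $X$. The role of strict monotonicity is to eliminate the ``absolute value'' obstruction to strict convexity: from an equality case $\trinorm{x} = \trinorm{y} = \trinorm{\tfrac{1}{2}(x+y)}$, combined with $|x+y| \leq |x| + |y|$ pointwise and the strict monotonicity, one should deduce $|x| = |y|$ pointwise. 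The seminorms $p_n$ then have to reconcile the signs on any two vectors of equal absolute value.

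The main obstacle is the potential uncountability of $\Gamma$: one cannot use all individual coordinate functionals in the series. The workaround is to build the $p_n$ from finitely-supported linear combinations --- for instance $\dind{A}$ for countably many finite $A \subseteq \Gamma$, which are automatically $\tau_p$-continuous --- chosen so that (i) the quadratic series converges uniformly on $\trinormdot$-balls, where the hypothesis $\pnormdot{\infty} \leq \normdot$ controls each individual seminorm, and (ii) the family separates sign patterns on a rich enough family of test configurations. The $\tau_p$-LSC of $\trinormdot$ then propagates this separation to the whole space, yielding strict convexity of the perturbed norm. Arranging (ii) under the countability constraint is the genuinely delicate point, and is where I would expect the bulk of the technical work to lie.
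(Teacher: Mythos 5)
A preliminary point: the paper does not prove this statement --- it is imported wholesale from Molt\'o--Orihuela--Troyanski--Zizler \cite[Theorem 2.6]{motz:06} --- so there is no in-paper proof to compare against and your sketch must stand on its own. In the direction you call easier (strictly convex $\Rightarrow$ strictly monotone), your two ingredients do not combine. The Minkowski gauge $q$ of the $\tau_p$-closed solid convex hull of the strictly convex ball is indeed an equivalent $\tau_p$-lower semicontinuous \emph{lattice} norm (note that norm-boundedness of the solid hull uses that $\normdot$ is a lattice norm, not merely $\pnormdot{\infty}\leq\normdot$), but it has no reason to remain strictly convex; and the quadratic average of $q$ with the original strictly convex norm $p$ is strictly convex but is in general no longer a lattice norm, nor even monotone, since $p$ is not monotone and nothing prevents $q(x)^2+p(x)^2>q(y)^2+p(y)^2$ when $|x|<|y|$. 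So the object to which you apply your (correct) closing observation --- that a strictly convex lattice norm is strictly monotone --- is never actually constructed. Passing from a strictly convex $\tau_p$-lower semicontinuous norm to one that is additionally compatible with the order is itself a nontrivial piece of \cite{motz:06}, not a two-line reduction.

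In the main direction the division of labour is inverted and the hard step is left open. For a lattice norm $\trinormdot$ with your monotonicity property, the equality case $\trinorm{x}=\trinorm{y}=\trinorm{\tfrac{1}{2}(x+y)}$ together with $|\tfrac{1}{2}(x+y)|\leq\tfrac{1}{2}(|x|+|y|)$ yields, via strict monotonicity, the pointwise equality $|x_\gamma+y_\gamma|=|x_\gamma|+|y_\gamma|$, i.e.\ \emph{sign-compatibility} of $x$ and $y$ --- not $|x|=|y|$. (Had you genuinely obtained $|x|=|y|=|\tfrac{1}{2}(x+y)|$ pointwise, you would be finished coordinatewise on the spot, and no sign-reconciling seminorms would be needed; so the step you assign to the seminorms is vacuous under your own intermediate claim.) The residual task is to force $|x|=|y|$, and this is precisely where countably many finitely supported $\tau_p$-continuous seminorms cannot succeed over uncountable $\Gamma$: they fail to separate moduli for exactly the same reason you observe they fail to separate coordinates. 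The resolution in \cite{motz:06} uses nonlinear $\tau_p$-lower semicontinuous convex functions built as suprema of $\Gamma$-indexed families involving the lattice operations and $\trinormdot$ itself, fed into a Lancien--Troyanski-type convexity argument. You correctly identify this as the delicate point, but identifying it is not supplying it; as written, both directions of the proposal contain gaps that are not merely routine to fill.
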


\begin{thm}
\label{adequate} Let $\mathscr{A}$ be an adequate family. Then the
following are equivalent.
\begin{enumerate}
\item $K_{\mathscr{A}}$ is a Gruenhage compact;
\item $h_\mathscr{A}$ admits a norm with strictly convex dual norm;
\item there exists a strictly increasing, $\tau_p$-lower semicontinuous
map $\mapping{\rho}{K_{\mathscr{A}}}{[0,1]}$;
\item $K_{\mathscr{A}}$ is a descriptive compact.
\end{enumerate}
\end{thm}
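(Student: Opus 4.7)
The plan is to close the cycle $(4) \Rightarrow (1) \Rightarrow (2) \Rightarrow (3) \Rightarrow (4)$. The implication $(4) \Rightarrow (1)$ is immediate, since every descriptive compact space is Gruenhage, as recorded earlier in this section. I expect $(1) \Rightarrow (2)$ to be the main obstacle; the natural route is to invoke the second part of Theorem \ref{gru}, for which I would verify that $(\ball{h_\mathscr{A}^*}, \weakstar)$ is Gruenhage. Since $\pi \colon K_\mathscr{A} \to \pi(K_\mathscr{A})$ is a $\tau_p$-$\weakstar$ homeomorphism and the bipolar theorem identifies $\ball{h_\mathscr{A}^*}$ as the $\weakstar$-closed absolutely convex hull of $\setcomp{\sum_{\gamma \in A}\varepsilon_\gamma f_\gamma}{A \in \mathscr{A},\, \varepsilon_\gamma = \pm 1}$, and since the Gruenhage property need not survive such hulls in general, the adequate lattice structure must be exploited carefully. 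I would construct the required Gruenhage families on $\ball{h_\mathscr{A}^*}$ by combining those given on $K_\mathscr{A}$ with rational scaling and sign data, in the spirit of the constructions of \cite{smith:08}.

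For $(2) \Rightarrow (3)$, the strictly convex dual norm supplied by (2) is $\weakstar$-lower semicontinuous, and hence $\tau_p$-lower semicontinuous on $h_\mathscr{A}^*$: since $\tau_p$ is coarser than $\weakstar$ and is Hausdorff, any $\tau_p$-limit of a bounded net coincides with each $\weakstar$-subnet limit, so the $\weakstar$-closed norm balls are also $\tau_p$-closed. Since $h_\mathscr{A}^*$ is a Banach lattice ideal of $\real^\Gamma$ with $\pnormdot{\infty} \leq \pnormdot{\mathscr{A}}$, Theorem \ref{sclattice} applied to this dual yields a $\tau_p$-lower semicontinuous, strictly monotone norm $\trinormdot$ on $h_\mathscr{A}^*$, equivalent to the dual norm. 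In particular, $M := \sup_{A \in \mathscr{A}} \trinorm{\dind{A}}$ is finite, and $\rho(\ind{A}) := \trinorm{\dind{A}}/M$ defines a map $K_\mathscr{A} \to [0,1]$ that is strictly increasing, since $\ind{B} < \ind{A}$ gives $\dind{B} < \dind{A}$ and hence $\trinorm{\dind{B}} < \trinorm{\dind{A}}$, and $\tau_p$-lower semicontinuous via the $\tau_p$-homeomorphism $\pi$.

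For $(3) \Rightarrow (4)$, the plan is to apply Lemma \ref{master}(III) with $K = K_\mathscr{A}$ and the supplied $\rho$. The set $K_\mathscr{A}$ lies in $\{0,1\}^\Gamma$ by property (3$^\prime$) and is closed under $\wedge$, since $\ind{A} \wedge \ind{B} = \ind{A \cap B}$ and $A \cap B \in \mathscr{A}$ by property (2) of Definition \ref{adequatedef}. The essential step is the verification of condition ($*$): given $\ind{B} < \ind{A}$ in $K_\mathscr{A}$, pick $\gamma^* \in A \setminus B$. By property (2), $A \setminus \{\gamma^*\} \in \mathscr{A}$, and strict monotonicity of $\rho$ gives $\rho(\ind{A \setminus \{\gamma^*\}}) < \rho(\ind{A})$. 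Set $\alpha = \tfrac{1}{2}\bigl(\rho(\ind{A \setminus \{\gamma^*\}}) + \rho(\ind{A})\bigr) < \rho(\ind{A})$ and let $U = \setcomp{y \in K_\mathscr{A}}{y_{\gamma^*} = 0}$, a $\tau_p$-open neighbourhood of $\ind{B}$. Any $z \leq \ind{A}$ with $z \in U$ is necessarily $\ind{C}$ for some $C \subseteq A \setminus \{\gamma^*\}$, whence $\rho(z) \leq \rho(\ind{A \setminus \{\gamma^*\}}) < \alpha$. This verifies ($*$), and Lemma \ref{master}(III) delivers descriptivity of $K_\mathscr{A}$, closing the cycle.
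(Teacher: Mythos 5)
Your implications (4)~$\Rightarrow$~(1), (2)~$\Rightarrow$~(3) and (3)~$\Rightarrow$~(4) are correct and essentially coincide with the paper's proof: the application of Theorem \ref{sclattice} (after noting $\tau_p$-lower semicontinuity of the dual norm on bounded sets) followed by restriction to the functionals $\dind{A}$ is exactly the intended route for (2)~$\Rightarrow$~(3), and your verification of ($*$) via $\gamma^* \in A\setminus B$, the coordinate neighbourhood $U = \setcomp{y \in K_\mathscr{A}}{y_{\gamma^*}=0}$ and a value $\alpha$ strictly between $\rho(\ind{A\setminus\{\gamma^*\}})$ and $\rho(\ind{A})$ is precisely the intended application of Lemma \ref{master}(III) (your midpoint choice of $\alpha$ is in fact slightly more careful than taking $\alpha = \rho(\ind{A\setminus\{\gamma^*\}})$ itself, which only yields $\rho(z) \leq \alpha$).

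The genuine gap is (1)~$\Rightarrow$~(2). You propose to verify that $(\ball{h^*_\mathscr{A}},\weakstar)$ is Gruenhage and then apply Theorem \ref{gru}(2), but you yourself note the obstruction --- the dual ball is a $\weakstar$-closed absolutely convex hull of signed sums, and the Gruenhage property need not survive such hulls --- and then leave the construction as a declaration of intent (``I would construct \ldots with rational scaling and sign data''). No argument is actually given, and it is not at all clear that one exists at this level of generality: even the signed set $\setcomp{\sum_{\gamma\in A}\varepsilon_\gamma f_\gamma}{A\in\mathscr{A},\ \varepsilon_\gamma=\pm1}$ is not obviously Gruenhage when $K_\mathscr{A}$ is, let alone its closed convex hull. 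The paper sidesteps this entirely by using part (1) of Theorem \ref{gru} rather than part (2): since $K_\mathscr{A}$ is Gruenhage, $\Ck{K_\mathscr{A}}$ admits a norm $\normdot$ with strictly convex dual, and the map $\mapping{T}{h_\mathscr{A}}{\Ck{K_\mathscr{A}}}$ given by $(Tx)(\ind{A}) = \dind{A}(x)$ is an isomorphic embedding, because adequacy (closure under subsets, splitting each $A$ into the parts where $x$ is nonnegative and negative) yields $\frac{1}{2}\pnorm{x}{\mathscr{A}} \leq \pnorm{Tx}{\infty} \leq \pnorm{x}{\mathscr{A}}$. The pullback norm $\trinorm{x} = \norm{Tx}$ then has strictly convex dual norm, since $T^*$ maps $\ball{\Ck{K_\mathscr{A}}^*}$ onto the new dual ball and any point of the new dual sphere has a preimage on the sphere of $\Ck{K_\mathscr{A}}^*$, so strict convexity transfers. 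You should replace your sketch of (1)~$\Rightarrow$~(2) with this transfer argument; as it stands, that implication is unproved.
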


\begin{proof}
(1) $\Rightarrow$ (2). If $\mathscr{A}$ is a Gruenhage compact then
$\Ck{\mathscr{A}}$ admits an equivalent norm $\normdot$ with a
strictly convex dual norm by Theorem \ref{gru}, part (1). Define
$\mapping{T}{h_\mathscr{A}}{\Ck{\mathscr{A}}}$ by $(Tx)(A) =
\ind{A}(x)$ and observe that, since $\mathscr{A}$ is adequate, we
have $\frac{1}{2}\pnorm{x}{\mathscr{A}} \leq \pnorm{Tx}{\infty} \leq
\pnorm{x}{\mathscr{A}}$. Consequently, the dual norm of
$\trinormdot$, where $\trinorm{x} = \norm{Tx}$, $x \in
h_\mathscr{A}$, is strictly convex. (2) $\Rightarrow$ (3) follows
from Theorem \ref{sclattice} and the natural embedding of
$K_\mathscr{A}$ in $\ball{h^*_\mathscr{A}}$ defined above. (3)
$\Rightarrow$ (4). If $\ind{B} < \ind{A}$ then take $\gamma \in
A\backslash B$. Property ($*$) of Lemma \ref{master} is fulfilled by
setting $\alpha = \rho(A\backslash\{\gamma\})$ and $U =
\setcomp{\ind{C}}{\gamma \notin C}$. (4) $\Rightarrow$ (1) follows
from \cite[Corollary 4]{smith:08}.
\end{proof}

\begin{rem}
The proof of the implication (1) $\Rightarrow$ (4) in Theorem
\ref{adequate} uses a renorming of a Banach space. The authors would
be interested to see a direct, purely topological proof of this
result. The proof of Theorem \ref{adequate} also shows that if
$\mathscr{A}$ is adequate then $\Ck{K_{\mathscr{A}}}$ admits a norm
with strictly convex dual norm if and only if $K_{\mathscr{A}}$ is a
Gruenhage space. We don't know if the direct implication holds in
general.
\end{rem}

We finish this section by providing a characterisation of spaces
with unconditional bases which admit an equivalent norm with
strictly convex dual norm. Let $X$ have a normalised,
1-unconditional basis $(e_\gamma)_{\gamma \in \Gamma}$. Let
$\normdot$ denote the dual norm on $\dual{X}$. Define
$$
\mathscr{A} \;=\; \setcomp{A \subseteq \Gamma}{\dind{A} \in
\dual{X}}.
$$
The family $\mathscr{A}$ contains all singletons $\{\gamma\}$,
$\gamma \in \Gamma$, and is closed under taking subsets and finite
unions. Hence $\mathscr{A}$ is adequate if and only if $X$ is
isomorphic to $\lpk{1}{\Gamma}$. While $K_\mathscr{A} =
\setcomp{\ind{A}}{A \in \mathscr{A}}$ is not compact in general, it
is $\sigma$-compact because $K_A  = \bigcup_{n=1}^\infty
K_{\mathscr{A}_n}$, where $\mathscr{A}_n$ is the adequate family
$$
\setcomp{A \in \mathscr{A}}{\norm{\dind{A}} \leq n}.
$$
Let $W$ be the linear span of
$$
\setcomp{\dind{A}}{A \in \mathscr{A}, n \in \nat}.
$$
While it is not true that $\closure{W}^{\normdot} = \dual{X}$ in
general, it is clear that $\closure{W}^{\pnormdot{\infty}} =
\dual{X}$. We require the following result.

\begin{prop}[{\cite[Corollary 10]{smith:08}}]
\label{coarserdense} Let $X$ be a Banach space and suppose that
$\dual{X} = \aspan^{\trinormdot}(K)$, where $K$ is a Gruenhage
compact in the $\weakstar$-topology and $\trinormdot$ is equivalent
to a coarser, $\weakstar$-lower semicontinuous norm on $\dual{X}$.
Then $\dual{X}$ admits an equivalent, strictly convex dual norm.
\end{prop}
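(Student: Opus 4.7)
The plan is to manufacture an equivalent, $\weakstar$-lower semicontinuous, strictly convex norm $\ptrinormdot{*}$ on $\dual{X}$ by perturbing the coarser given dual norm with a term built from the Gruenhage structure of $K$. Let $\ptrinormdot{0}$ denote the coarser, $\weakstar$-lower semicontinuous norm, and fix $C>0$ such that $\ptrinorm{f}{0} \leq \trinorm{f} \leq C\ptrinorm{f}{0}$ for every $f \in \dual{X}$. Since $\ptrinormdot{0}$ is itself $\weakstar$-lower semicontinuous, it is already a dual norm, so the task reduces to perturbing it within its equivalence class to gain strict convexity.

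Using the Gruenhage families $(\mathscr{U}_n)_{n \in \nat}$ of $\weakstar$-open subsets witnessing that $K$ is Gruenhage, I would build a sequence of non-negative, convex, bounded, $\weakstar$-lower semicontinuous functions $\varphi_n : \dual{X} \to [0,\infty)$, each dominated by a constant multiple of $\ptrinormdot{0}$. The construction follows the spirit of Theorem \ref{gru}: decompose each $\mathscr{U}_n$ by its finite-multiplicity subfamilies, and to each subfamily associate a $\weakstar$-lower semicontinuous Minkowski-type functional defined via the $\weakstar$-closed absolutely convex hull of the corresponding piece of $K$, truncated by the unit ball of $\ptrinormdot{0}$. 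The essential feature to build in is a separation property: whenever $f \neq g$ in $K$, some $\varphi_n$ is strictly convex on the segment $[f,g]$.

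I would then define
$$
\ptrinorm{f}{*}^2 \;=\; \ptrinorm{f}{0}^2 + \sum_{n=1}^\infty 2^{-n}\varphi_n(f)^2.
$$
Uniform convergence on $\ptrinormdot{0}$-bounded sets ensures $\ptrinormdot{*}$ is equivalent to $\ptrinormdot{0}$, hence to $\trinormdot$. Since every summand is $\weakstar$-lower semicontinuous, so is $\ptrinormdot{*}^2$, and thus $\ptrinormdot{*}$ is a dual norm. For strict convexity, suppose $\ptrinorm{f}{*} = \ptrinorm{g}{*} = \ptrinorm{\frac{1}{2}(f+g)}{*}$. The standard convexity-of-squares argument applied to the series yields $\ptrinorm{f}{0} = \ptrinorm{g}{0} = \ptrinorm{\frac{1}{2}(f+g)}{0}$ and $\varphi_n(f) = \varphi_n(g) = \varphi_n(\frac{1}{2}(f+g))$ for every $n$; combined with convexity of each $\varphi_n$, this forces $\varphi_n$ to be affine on $[f,g]$. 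Using the $\trinormdot$-density of $\aspan(K)$ in $\dual{X}$, I would approximate $f$ and $g$ by finite linear combinations over $K$ and invoke the strict separation property of the $\varphi_n$ to conclude $f = g$.

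The main obstacle lies in the construction of the $\varphi_n$: they must simultaneously be $\weakstar$-lower semicontinuous on all of $\dual{X}$, bounded by a constant multiple of $\ptrinormdot{0}$ (to keep the series summable and $\ptrinormdot{*}$ equivalent to $\ptrinormdot{0}$), and collectively sharp enough to distinguish arbitrary $f \neq g$ in $\dual{X}$, not merely points of $K$. Transferring Gruenhage separation from $K$ to its $\trinormdot$-closed linear span, with $\ptrinormdot{0}$ playing the regularizing role, is the delicate point carried out in \cite[Corollary 10]{smith:08}.
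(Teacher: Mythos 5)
The paper offers no proof of this proposition: it is imported verbatim from \cite[Corollary 10]{smith:08}, so there is no internal argument to compare yours against, and the question is whether your blind attempt actually establishes the result. It does not, and you have in effect conceded this yourself. The Day-type series $\ptrinorm{f}{*}^{2} = \ptrinorm{f}{0}^{2} + \sum_{n} 2^{-n}\varphi_n(f)^{2}$ is the right general shape, and the standard convexity-of-squares argument correctly reduces strict convexity to the claim that if $\ptrinormdot{0}$ and every $\varphi_n$ are constant on the segment $[f,g]$ then $f = g$. But the separation property you propose to build into the $\varphi_n$ only distinguishes pairs of points \emph{of $K$}, whereas the pair $f \neq g$ to be separated is arbitrary in $\dual{X}$. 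The closing density step does not bridge this gap: the equalities $\ptrinorm{f}{*} = \ptrinorm{g}{*} = \ptrinorm{\frac{1}{2}(f+g)}{*}$ are not stable under perturbation, so approximating $f$ and $g$ by finite combinations over $K$ yields no equalities to exploit, and constancy of the $\varphi_n$ on $[f,g]$ says nothing about their behaviour on segments joining the approximants. Strict convexity is not a property that transfers from a dense set.

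The genuinely delicate content of \cite[Corollary 10]{smith:08} is precisely this transfer: converting the Gruenhage separation structure on $K$ into a separation structure (in Smith's formulation, by countably many families of $\weakstar$-open half-spaces) on all of $\dual{X} = \aspan^{\trinormdot}(K)$, with the coarser $\weakstar$-lower semicontinuous norm serving to keep the resulting convex functions $\weakstar$-lower semicontinuous and the construction within the right equivalence class. Your final paragraph defers exactly this step to the result being proved, which makes the argument circular as written. To repair it you would need either to carry out that transfer explicitly or to reduce to the slice-separation characterisation of dual spaces admitting strictly convex dual norms established in \cite{smith:08}.
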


A norm on a dual space $\dual{X}$ is said to be $\weakstar${\em
-LUR} if, given $x$ and $x_n$ in $\dual{X}$ such that $\norm{x} =
\norm{x_n} = 1$ and $\norm{x + x_n} \rightarrow 2$, we have $x_n
\rightarrow x$ in the $\weakstar$ topology. Such norms are studied
in \cite{raja:03}.

\begin{thm}[{\cite[Theorem 1.3]{raja:03}}]\label{rajawstar}If
$\ball{\dual{X}}$ is a descriptive compact space in the
$\weakstar$-topology then $X$ admits an equivalent norm with
$\weakstar$-LUR dual norm.
\end{thm}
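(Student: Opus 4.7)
The plan is to convert the $\sigma$-isolated network on $(\ball{\dual{X}},\weakstar)$ into a countable family of convex, $\weakstar$-lower semicontinuous functions that collectively separate points of $\ball{\dual{X}}$, and then form a $\weakstar$-LUR dual norm by the classical quadratic-sum construction.

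Let $\mathscr{N} = \bigcup_n \mathscr{N}_n$ be a $\sigma$-isolated network for $(\ball{\dual{X}},\weakstar)$, so each $N \in \mathscr{N}_n$ lies in a $\weakstar$-open set $U_N$ disjoint from every other member of $\mathscr{N}_n$. The technical heart of the argument is to associate to this data a countable family of $\weakstar$-closed convex subsets $C_k \subseteq \dual{X}$ with the following separation property: whenever $x^*, y^* \in \ball{\dual{X}}$ are distinct, there is some $k$ such that $x^* \in C_k$ while $y^*$ is separated from $x^*$ by a supporting hyperplane of $C_k$. The construction exploits the $\weakstar$-compactness and convexity of $\ball{\dual{X}}$, the Hahn--Banach theorem, and a combinatorial refinement of the isolation of each $\mathscr{N}_n$, in the spirit of the Hansell-type arguments invoked in the proof of Lemma \ref{master}(II). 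From each $C_k$ one extracts a convex, $\weakstar$-lower semicontinuous function $\varphi_k \colon \dual{X} \to [0,\infty)$, bounded on $\ball{\dual{X}}$, realised as a countable supremum of $\weakstar$-continuous linear functionals supporting $C_k$.

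Now define
$$
\trinorm{x^*}^2 \;=\; \norm{x^*}^2 + \sum_{k=1}^\infty 2^{-k} \varphi_k(x^*)^2.
$$
This is an equivalent, $\weakstar$-lower semicontinuous, convex function on $\dual{X}$, hence the square of an equivalent dual norm. To verify $\weakstar$-LUR at $x^*$, suppose $\trinorm{x^*_m} = \trinorm{x^*} = 1$ with $\trinorm{x^* + x^*_m} \to 2$. Applying the parallelogram identity $2\alpha^2 + 2\beta^2 - (\alpha+\beta)^2 = (\alpha-\beta)^2$ to each convex summand forces $\norm{x^*_m} \to \norm{x^*}$ and $\varphi_k(x^*_m) \to \varphi_k(x^*)$ for every $k$. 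If $y^*$ is any $\weakstar$-cluster point of $(x^*_m)$, then $\weakstar$-lower semicontinuity gives $\varphi_k(y^*) \leq \varphi_k(x^*)$, while convexity applied to $\tfrac{1}{2}(x^*+y^*)$ combined with the preceding limits forces the reverse inequality, so $\varphi_k(y^*) = \varphi_k(x^*)$ for every $k$. The separation property of $\{C_k\}$ then pins $y^* = x^*$, and $\weakstar$-compactness of $\ball{\dual{X}}$ promotes cluster-point convergence to full sequential convergence.

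The main obstacle is the construction of the family $\{C_k\}$ with the required slicely-separating property: bridging from a purely topological $\sigma$-isolated network to convex data compatible with the linear geometry of $\dual{X}$ requires careful manipulation of the network members using Hahn--Banach separation and Hansell-style refinement of isolated families, while retaining a quantitative separation adequate for the convexity-based renorming argument. Once that convex-slicing refinement is in place, the remainder of the proof follows the now-standard Deville--Godefroy--Zizler pattern sketched above.
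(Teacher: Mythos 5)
First, note that the paper itself offers no proof of this statement: it is quoted verbatim from Raja \cite[Theorem 1.3]{raja:03} and used as a black box, so there is no internal argument to compare yours against; the only meaningful comparison is with Raja's original proof.

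Your proposal has the right overall architecture (topological network $\rightarrow$ countable family of convex $\weakstar$-lower semicontinuous functions $\rightarrow$ Deville--Godefroy--Zizler quadratic sum), but the entire mathematical content of the theorem is concentrated in the step you describe in one sentence and do not carry out: passing from a $\sigma$-isolated network to convex sets $C_k$ with a \emph{slice}-separation property. A family that is isolated by arbitrary $\weakstar$-open sets is in general very far from being isolated by open half-spaces, and Hahn--Banach separation alone does not bridge this: the open set $U_N$ witnessing isolation of $N$ need not contain any slice of $\ball{\dual{X}}$ through a given point of $N$, and the network members are not convex, so there is nothing to separate in the convex-geometric sense. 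Upgrading topological isolation to slice isolation (Raja's notion of $\sigma$-\emph{slicely} isolated networks) is precisely the theorem's difficulty; Raja's proof achieves it through a delicate argument exploiting $\weakstar$-compactness, radial structure of the ball, and a transfinite refinement procedure, and it is this step -- not the concluding DGZ computation -- that makes the result nontrivial. As written, your "technical heart" paragraph asserts the existence of the $C_k$ without an argument, so the proof has a genuine gap.

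A secondary issue: even granting the $C_k$, your verification of $\weakstar$-LUR is loose. From $\varphi_k(y^*) \leq \varphi_k(x^*)$ for a cluster point $y^*$, the "reverse inequality by convexity at $\tfrac{1}{2}(x^*+y^*)$" does not follow formally from the stated limits (convexity and lower semicontinuity both give upper bounds at the midpoint, not the lower bound you need on $\varphi_k(y^*)$); and equality $\varphi_k(y^*) = \varphi_k(x^*)$ for all $k$ only pins $y^* = x^*$ if the separation property is formulated as a statement about the \emph{values} of the $\varphi_k$, which is not quite what you wrote. These are repairable with the standard DGZ convexity lemma once the $C_k$ exist, but the construction of the $C_k$ is where the proof actually lives, and it is missing.
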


The next result is our promised characterisation.

\begin{thm}
\label{fullcharac} Let $X$ have a normalised 1-unconditional basis,
with $\mathscr{A}_n$, $n \in \nat$, defined as above. Then the
following are equivalent.
\begin{enumerate}
\item $(\ball{\dual{X}},\weakstar)$ is a Gruenhage compact space;
\item $X$ admits an equivalent norm with a strictly convex dual norm;
\item there exists a strictly increasing, $\tau_p$-lower semicontinuous
map
$$
\mapping{\rho}{K_\mathscr{A}}{[0,1]};
$$
\item $(\ball{\dual{X}},\weakstar)$ is a descriptive compact space;
\item $X$ admits an equivalent norm with a $\weakstar$-LUR dual norm.
\end{enumerate}
\end{thm}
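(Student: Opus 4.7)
The plan is to close a cycle of implications among the five conditions. Several arise directly from results already quoted in the excerpt: a \weakstar-LUR norm is automatically strictly convex, giving (5) $\Rightarrow$ (2); every descriptive compact is Gruenhage, as noted in the introduction, giving (4) $\Rightarrow$ (1); Theorem \ref{rajawstar} supplies (4) $\Rightarrow$ (5); and Theorem \ref{gru}(2) supplies (1) $\Rightarrow$ (2). The substance of the argument therefore lies in (2) $\Rightarrow$ (3) and (3) $\Rightarrow$ (4).

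For (2) $\Rightarrow$ (3), recall that $\dual{X}$ is a Banach lattice ideal of $\real^\Gamma$ with $\pnormdot{\infty} \leq \normdot$. The equivalent strictly convex dual norm given by (2) is \weakstar-lsc, and on bounded subsets of $\dual{X}$ the topologies \weakstar{} and $\tau_p$ agree (any $x \in X$ can be approximated in norm by finite sub-sums of its unconditional basis expansion, so bounded $\tau_p$-convergence upgrades to \weakstar-convergence), hence this norm is in fact $\tau_p$-lsc. Theorem \ref{sclattice} then furnishes a $\tau_p$-lsc norm $\trinormdot$ on $\dual{X}$ satisfying $\trinorm{x} < \trinorm{y}$ whenever $|x| < |y|$. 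Setting $\rho(\ind{A}) = \trinorm{\dind{A}}/(1 + \trinorm{\dind{A}})$ defines a map $\mapping{\rho}{K_{\mathscr{A}}}{[0,1]}$ that is strictly increasing by the strict monotonicity of $\trinormdot$ (since $B \subsetneq A$ gives $0 \leq \dind{B} < \dind{A}$), and $\tau_p$-lsc by combining the $\tau_p$-\weakstar{} continuity of $\pi \colon \ind{A} \mapsto \dind{A}$ with the $\tau_p$-lsc of $\trinormdot$.

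The implication (3) $\Rightarrow$ (4) is the main obstacle. For each $n \in \nat$, the family $\mathscr{A}_n$ is adequate and the restriction $\rho\restrict{K_{\mathscr{A}_n}}$ is strictly increasing and $\tau_p$-lsc, so Theorem \ref{adequate}(3) $\Rightarrow$ (4) renders $K_{\mathscr{A}_n}$ descriptive; via the $\tau_p$-\weakstar{} homeomorphism $\pi$, each $\pi(K_{\mathscr{A}_n}) \subseteq n\ball{\dual{X}}$ is \weakstar-descriptive. My proposal for aggregating these is to form a single \weakstar-compact set $K^* = \{0\} \cup \bigcup_n \lambda_n \pi(K_{\mathscr{A}_n}) \subseteq \ball{\dual{X}}$ with scalings $\lambda_n$ chosen so that $\lambda_n n \to 0$: any sequence in $K^*$ whose indices $n$ become unbounded converges in norm to $0$, so $K^*$ is \weakstar-compact, and being a countable union of descriptive compacta with one extra point it admits a $\sigma$-isolated network. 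Because $\aspan(K^*)$ is $\pnormdot{\infty}$-dense in $\dual{X}$ and $\pnormdot{\infty}$ is a coarser \weakstar-lsc norm, the final step is to invoke a descriptive analogue of Proposition \ref{coarserdense}, propagating the $\sigma$-isolated network from $K^*$ onto the whole ball. This propagation, which transfers a $\sigma$-isolated network from a norm-dense spanning compact to the entire \weakstar-ball, exploiting the lattice decomposition $x^* = (x^*)^+ - (x^*)^-$, is the principal technical hurdle in the proof.
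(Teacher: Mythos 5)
Your handling of the easy implications and of (2) $\Rightarrow$ (3) matches the paper: it likewise invokes Theorem \ref{sclattice} to produce a strict lattice dual norm $\trinormdot$ and sends $\ind{A}$ to $\trinorm{\dind{A}}$, and your renormalisation into $[0,1]$ together with the observation that a \weakstar-lower semicontinuous norm on $\dual{X}$ is automatically $\tau_p$-lower semicontinuous are correct (and slightly more careful than the paper's own wording). The genuine problem is your closing implication. You attempt (3) $\Rightarrow$ (4) directly and, after correctly assembling the descriptive \weakstar-compact set $K^* = \{0\} \cup \bigcup_n \lambda_n \pi(K_{\mathscr{A}_n})$ whose span is $\pnormdot{\infty}$-dense in $\dual{X}$, you appeal to a ``descriptive analogue of Proposition \ref{coarserdense}'' that would propagate a $\sigma$-isolated network from $K^*$ to all of $\ball{\dual{X}}$. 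No such transfer theorem appears in the paper or in its quoted sources, and it is not a routine deduction: Proposition \ref{coarserdense} is itself a nontrivial renorming theorem from \cite{smith:08} whose conclusion is the existence of a strictly convex dual norm, not a topological property of the whole ball. Flagging this step as ``the principal technical hurdle'' and leaving it unproved means the cycle of implications is not closed.

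The paper sidesteps exactly this hurdle by routing through (2). From (3) it builds essentially your set $K^*$ (with $\lambda_n = n^{-2}$), notes that descriptive compacta are Gruenhage, and applies Proposition \ref{coarserdense} \emph{as stated} to obtain the strictly convex dual norm; this gives (3) $\Rightarrow$ (2). It then proves (2) $\Rightarrow$ (4) by a separate argument: take the strict lattice dual norm $\trinormdot$ furnished by Theorem \ref{sclattice}, apply Lemma \ref{master}, part II, to the positive part $B_+$ of its unit ball with $\rho = \trinormdot$ to get a $\sigma$-isolated network on the set $\setcomp{f \in B_+}{\trinorm{f}=1}$, extend it to all of $B_+$ via Hansell's theorem, and conclude for the full ball using $B \subseteq B_+ - B_+$ and the fact that descriptiveness is preserved under continuous images. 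If you replace your final propagation step with this two-stage detour through (2), your argument goes through; as written it has a gap precisely where you anticipated one.
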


\begin{proof}
(1) $\Rightarrow$ (2) follows from Theorem \ref{gru}, part (2). (4)
$\Rightarrow$ (5) follows from Theorem \ref{rajawstar}. (5)
$\Rightarrow (2)$ is an immediate consequence of the definition and
(4) $\Rightarrow$ (1) is \cite[Corollary 4]{smith:08}. We prove (2)
$\Leftrightarrow$ (3) and (2) $\Rightarrow$ (4). Suppose that (2)
holds. Using Theorem \ref{sclattice}, we can find a strict lattice
dual norm $\trinormdot$ on $\dual{X}$. It is easy to see that the
map $\ind{A} \mapsto \trinorm{\dind{A}}$ satisfies (4). Now suppose
that $\rho$ satisfies (3). Let $K_n = \setcomp{\dind{A}}{A \in
\mathscr{A}_n}$, where $\mathscr{A}_n$ is as above. By Theorem
\ref{adequate}, each $K_n$ is a descriptive compact in the
$\weakstar$-topology. If we set $K = \bigcup_{n=1}^\infty n^{-2}K_n
\cup \{0\}$ then $K$ is again descriptive, and
$\closure{\aspan}^{\pnormdot{\infty}}(K) =
\closure{W}^{\pnormdot{\infty}} = \dual{X}$, where $W$ is as above.
Since $\pnormdot{\infty}$ is a $\weakstar$-lower semicontinuous norm
on $\dual{X}$, coarser than $\normdot$, we can apply Proposition
\ref{coarserdense} to obtain an equivalent, strictly convex dual
norm on $\dual{X}$.

To finish, we prove (2) $\Rightarrow$ (4). Given (2), let
$\trinormdot$ be as above, and identify the positive part $B_+$ of
its unit ball with a $\tau_p$-compact subset of $[0,1]^\Gamma$. By
applying Lemma \ref{master} to $B_+$ with $\rho = \trinormdot$, we
have that $\setcomp{f \in B_+}{\trinorm{f} = 1}$ has a
$\sigma$-isolated network in the $\tau_p$ (equivalently $\weakstar$)
topology. It follows from \cite[Theorem 7.2]{hansell:01} that
$(B_+,\weakstar)$ has a $\sigma$-isolated network, so is a
descriptive compact. Because descriptive compact spaces are
preserved under continuous images \cite[Corollary 3.4]{or:04}, we
have that $B \subseteq B_+ - B_+$ is descriptive.
\end{proof}

\begin{rem}\label{notmuchl1}
We observe that $A \in \mathscr{A}$ if and only if
$(x_\gamma)_{\gamma \in A} \mapsto \sum_{\gamma \in A} x_\gamma
e_\gamma$ defines an isomorphism from $\lpk{1}{A}$ into $X$. Thus
Theorem \ref{fullcharac}, part (3), is a more precise formulation of
the assertion, made after the proof of Theorem \ref{flurequiv}, that
in the dual strictly convex case $X$ cannot contain too many copies
of $\lp{1}$.
\end{rem}

Finally, we note that there is a Banach space of type
$h_\mathscr{A}$ which does not satisfy the conditions of Theorems
\ref{adequate} or \ref{fullcharac} and does not contain an
isomorphic copy of $\lpk{1}{\Gamma}$ for any uncountable $\Gamma$.
See \cite[Theorems 1.7 and 3.6 (c)]{am:93}.

\section{Examples}\label{examples}

In this section, we apply Lemma \ref{master} and Theorem
\ref{adequate} to obtain some new results concerning examples of
compact spaces from the literature.

\begin{defn}
We shall say that a partially ordered set $(T,<)$ is a {\em
pseudotree} (respectively {\em tree}) if, for every $x \in T$, the
set $I_x = \setcomp{w \in T}{w < x}$ is totally (respectively {\em
well}) ordered.
\end{defn}

Pseudotrees were introduced by Kurepa and studied in \cite{ls:84}
under the name of {\em bushes}. We say that a subset $\Gamma$ of a
partially ordered set is an {\em antichain} if no two distinct
elements of $\Gamma$ are comparable. A partially ordered set is
called {\em special} if it can be written as a countable union of
antichains. Given a pseudotree $T$, we let $\mathscr{A}_T$ be the
adequate family of all totally ordered subsets of $T$. Such families
were investigated in the context of Talagrand compact spaces in
\cite{ls:84}. A compact space $K$ is called {\em Talagrand} if the
Banach space $\Ck{K}$ is $\mathcal{K}${\em -analytic} in its weak
topology; see, for example, \cite{aam:08,ls:84,tal:79} for details.

\begin{prop}[{\cite[Theorem 3.2]{ls:84}}]
Let $T$ be a pseudotree. Then $K_{\mathscr{A}_T}$ is a Talagrand
compact if and only if $T$ is a countable union of antichains.
\end{prop}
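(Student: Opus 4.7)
The plan is to prove the two implications separately.

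For $(\Leftarrow)$: Suppose $T = \bigcup_{n \in \nat} A_n$, with each $A_n$ an antichain. Every $C \in \mathscr{A}_T$ is a chain in $T$, so $|C \cap A_n| \leq 1$ for each $n$; in particular $C \cap A_n$ is finite. By Talagrand's characterization of Eberlein adequate compacta (cf.\ \cite{tal:79}), the decomposition $T = \bigcup_n A_n$ certifies that $K_{\mathscr{A}_T}$ is an Eberlein compact, hence \emph{a fortiori} a Talagrand compact.

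For $(\Rightarrow)$: Set $K := K_{\mathscr{A}_T}$. Since $K$ is Talagrand, $\Ck{K}$ is $\mathcal{K}$-analytic in its weak topology, providing an upper semicontinuous map $\sigma \mapsto \Phi(\sigma)$ from $\nat^{\nat}$ into the weakly (equivalently, on bounded sets, $\tau_p$-) compact subsets of $\Ck{K}$, with $\Ck{K} = \bigcup_\sigma \Phi(\sigma)$. The coordinate evaluations $e_\gamma(\ind{A}) = \ind{A}(\gamma)$ belong to $\Ck{K}$, so for each $\gamma \in T$ we may choose $\sigma_\gamma \in \nat^{\nat}$ with $e_\gamma \in \Phi(\sigma_\gamma)$. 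The key step is that no ``$\Phi$-bounded'' subset of $T$ contains an infinite ascending chain: if $\gamma_1 < \gamma_2 < \ldots$ were such a chain with all $\sigma_{\gamma_n}$ in some compact $L \subseteq \nat^{\nat}$, then by upper semicontinuity $(e_{\gamma_n}) \subseteq \Phi(L)$, a $\tau_p$-compact subset of $\Ck{K}$. A $\tau_p$-cluster point $f \in \Phi(L)$ would satisfy $f(\ind{C}) = 1$ for the chain $C = \{\gamma_n : n \in \nat\}$ (as $e_{\gamma_n}(\ind{C}) = 1$ throughout), while $f(\ind{\{\gamma_1, \ldots, \gamma_k\}}) = 0$ for each $k$ (as $e_{\gamma_n}$ vanishes at this point for $n > k$). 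Since $\ind{\{\gamma_1, \ldots, \gamma_k\}} \to \ind{C}$ in $\tau_p$ as $k \to \infty$, continuity of $f$ yields the contradiction $1 = f(\ind{C}) = \lim_k f(\ind{\{\gamma_1, \ldots, \gamma_k\}}) = 0$.

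The main obstacle is to promote this ``no infinite ascending chain in any $\Phi$-bounded piece'' to an honest countable antichain decomposition of $T$. Because $\nat^{\nat}$ is not $\sigma$-compact, one cannot cover $T$ by countably many $\Phi$-bounded pieces directly, so the pseudotree structure must be invoked. The natural approach stratifies $T$ by finite initial segments of $\sigma_\gamma$: for $\bar m \in \nat^{<\omega}$, define $A_{\bar m} \subseteq T$ from conditions on $\sigma_\gamma \restrict{|\bar m|}$, exploiting that in a pseudotree, whenever $\gamma < \gamma'$ the initial segment $I_\gamma$ is a subchain of $I_{\gamma'}$, so the chain-forbidding argument of the previous paragraph can be applied fibrewise. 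The delicate combinatorial heart of the implication is choosing the $A_{\bar m}$ so that each is a genuine antichain with $T = \bigcup_{\bar m} A_{\bar m}$; this is where the interplay between the topological compactness of the $\Phi(L)$'s and the branch structure of the pseudotree becomes decisive, and where the pseudotree hypothesis on $T$ is truly used.
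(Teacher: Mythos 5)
The paper gives no proof of this proposition --- it is quoted directly from Leiderman and Sokolov \cite[Theorem 3.2]{ls:84} --- so your attempt can only be measured against the standard argument. Your $(\Leftarrow)$ direction is fine, and is essentially the argument the paper itself uses for (3) $\Rightarrow$ (1) of Proposition \ref{pseudotree}: a chain meets each antichain $A_n$ in at most one point, so $\ind{C} \mapsto \sum_n 2^{-n}\ind{C \cap A_n}$ embeds $K_{\mathscr{A}_T}$ as a weakly compact subset of $\czerok{T}$, whence it is Eberlein and \emph{a fortiori} Talagrand; citing the characterisation of adequate Eberlein compacta is an acceptable shortcut.

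The $(\Rightarrow)$ direction, however, is not a proof. The lemma you do establish --- that for compact $L \subseteq \nat^\nat$ the set $T_L = \setcomp{\gamma}{\sigma_\gamma \in L}$ contains no infinite ascending chain --- is correctly argued: $\Phi(L)$ is weakly compact, weak and pointwise topologies coincide on it, so the $\tau_p$-cluster point $f$ is a continuous function on $K$, and $\ind{\{\gamma_1,\ldots,\gamma_k\}} \rightarrow \ind{C}$ yields the contradiction. But your final paragraph openly defers the step that converts this $\nat^\nat$-indexed family of ``small'' sets into a countable antichain decomposition of $T$. Since $\nat^\nat$ is not $\sigma$-compact this does not follow formally, and that conversion \emph{is} the theorem; describing it as ``the delicate combinatorial heart'' is accurate but does not discharge it. Two further points would need attention even along your sketch: (i) the lemma excludes only ascending $\omega$-chains in $T_L$, whereas an infinite chain in a pseudotree may contain only a descending $\omega^*$-sequence, so the symmetric argument (using $\ind{\{\gamma_n : n \geq k\}} \rightarrow \ind{\varnothing}$) must also be run before one can say each piece has finite height and hence splits into the antichains $\setcomp{x}{\card{I_x \cap T_L} = n}$; (ii) the proposed stratification by finite initial segments $\sigma_\gamma\restrict{n}$ controls only finitely many coordinates, so the resulting pieces are not of the form $T_L$ with $L$ compact, and bridging that gap requires a genuine diagonalisation along the chains $I_\gamma$ (or a Baire-category argument in the style of Talagrand and Leiderman--Sokolov). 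As written, the forward implication is incomplete.
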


We can use Theorem \ref{adequate} to provide a straightforward
extension of this result. Recall that a compact space $K$ is called
{\em Eberlein} if it is homeomorphic to a weakly compact subset of a
Banach space. The implications Eberlein $\Rightarrow$ Talagrand
$\Rightarrow$ descriptive $\Rightarrow$ Gruenhage have been
established and are known to be strict.

\begin{prop}\label{pseudotree}
Let $T$ be a pseudotree. Then the following are equivalent.
\begin{enumerate}
\item $K_{\mathscr{A}_T}$ is Eberlein;
\item $K_{\mathscr{A}_T}$ is Gruenhage;
\item $T$ is special.
\end{enumerate}
\end{prop}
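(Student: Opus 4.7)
The plan begins by noting that (1) $\Rightarrow$ (2) is immediate from the chain Eberlein $\Rightarrow$ descriptive $\Rightarrow$ Gruenhage recalled in the preamble to this section. The remaining work is to prove (3) $\Rightarrow$ (1) and (2) $\Rightarrow$ (3). For the latter, Theorem \ref{adequate} is indispensable: it replaces the Gruenhage hypothesis by the existence of a strictly increasing, $\tau_p$-lower semicontinuous map $\mapping{\rho}{K_{\mathscr{A}_T}}{[0,1]}$, which is far more tractable than the definition itself.

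For (3) $\Rightarrow$ (1), write $T = \bigsqcup_{n \in \nat} T_n$ with each $T_n$ an antichain, and define $\mapping{\phi}{K_{\mathscr{A}_T}}{[0,1]^T}$ by setting $\phi(\ind{A})(t) = 1/n$ whenever $t \in A \cap T_n$, and $0$ otherwise. Because a chain $A$ meets the antichain $T_n$ in at most one point, the set $\setcomp{t}{\phi(\ind{A})(t) > \varepsilon}$ is finite for every $\varepsilon > 0$, so $\phi(\ind{A}) \in \czerok{T}$. The map $\phi$ is coordinate-wise continuous, and injectivity follows because the support recovers $A$; thus $\phi$ is a homeomorphism onto its image. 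On bounded subsets of $\czerok{T}$ the pointwise topology agrees with the weak topology, so $\phi(K_{\mathscr{A}_T})$ is weakly compact and $K_{\mathscr{A}_T}$ is Eberlein.

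For (2) $\Rightarrow$ (3), fix $\rho$ as above. For each $t \in T$ the sets $I_t = \setcomp{w \in T}{w < t}$ and $I_t \cup \{t\}$ are chains by the pseudotree axiom, so both belong to $\mathscr{A}_T$, and $r(t) := \rho(\ind{I_t \cup \{t\}}) - \rho(\ind{I_t}) > 0$ by strict monotonicity. The crucial observation is a telescoping estimate: for any finite chain $t_1 < t_2 < \cdots < t_N$ in $T$, the inclusions $I_{t_k} \cup \{t_k\} \subseteq I_{t_{k+1}}$ yield
$$
\sum_{k=1}^N r(t_k) \;\leq\; \rho(\ind{I_{t_N} \cup \{t_N\}}) - \rho(\ind{I_{t_1}}) \;\leq\; 1.
$$
Putting $T_n = \setcomp{t \in T}{r(t) > 1/n}$, any chain inside $T_n$ has fewer than $n$ elements, since the displayed sum would otherwise exceed $1$. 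Stratify $T_n$ by $\ell(t) := $ the length of the longest chain in $T_n$ terminating at $t$; the implication ``$s < t$ in $T_n \Rightarrow \ell(s) < \ell(t)$'' shows each level $\ell^{-1}(k)$ is an antichain, and there are only finitely many levels. Hence $T_n$ is a finite union of antichains, and $T = \bigcup_n T_n$ is special.

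The substantive step is the telescoping bound in (2) $\Rightarrow$ (3): combinatorial structure on $T$ must be extracted from a purely scalar invariant $\rho$, and strict monotonicity is the only leverage available. Once short chains in each $T_n$ are secured, the antichain decomposition of $T_n$ is standard. The reverse implication (3) $\Rightarrow$ (1) sharpens the Leiderman--Sokolov Talagrand conclusion to Eberlein at essentially no extra cost, via the explicit $\czerok{T}$-embedding above.
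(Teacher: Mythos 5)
Your proof is correct. The implication (3) $\Rightarrow$ (1) is essentially identical to the paper's: the paper embeds $K_{\mathscr{A}_T}$ into $\czerok{T}$ via $\pi(\ind{A})(x) = 2^{-n}$ for $A \cap \Gamma_n = \{x\}$, which is your $\phi$ with $2^{-n}$ in place of $1/n$. For (2) $\Rightarrow$ (3) you take a genuinely different route. Both arguments start from the same leverage point, namely the gap $\rho(\ind{I_t}) < \rho(\ind{I_t \cup \{t\}})$ supplied by Theorem \ref{adequate} and the inclusion $I_s \cup \{s\} \subseteq I_t$ for $s < t$. The paper simply picks a rational $\sigma(t)$ in each gap; the resulting map $\mapping{\sigma}{T}{\rat}$ is strictly increasing, so its fibres are antichains and the countability of $\rat$ finishes the proof in two lines. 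You instead quantify the gap as $r(t) > 0$, observe that the gaps telescope to at most $1$ along any finite chain, and deduce that $T_n = \setcomp{t}{r(t) > 1/n}$ contains no chain of length $n$, whence a rank-function stratification splits $T_n$ into finitely many antichains. Your version is longer but yields slightly more structure (each $T_n$ is a \emph{finite} union of antichains, with an explicit bound) and replaces the density-of-$\rat$ trick by a purely quantitative one; the paper's version is the more economical. Both are complete and correct.
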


\begin{proof}
Only (2) $\Rightarrow$ (3) and (3) $\Rightarrow$ (1) require proof.
Assume (2). By Theorem \ref{adequate}, there exists a strictly
increasing map $\mapping{\rho}{K_{\mathscr{A}_T}}{[0,1]}$. If we
pick $\sigma(x) \in (\rho(\ind{I_x}),\rho(\ind{I_x \cup \{x\}}))
\cap \rat$ for each $x \in T$, it is evident that
$\mapping{\sigma}{T}{\rat}$ is strictly increasing, and that the
fibres $\sigma^{-1}(q)$, $q \in \rat$, are antichains.

Assume (3). Let $T = \bigcup_{n \in \nat} \Gamma_n$, where
$\Gamma_n$, $n \in \nat$, is a pairwise disjoint family of
antichains. It is clear that the map
$\mapping{\pi}{K_{\mathscr{A}_T}}{\czerok{T}}$, defined by
$\pi(\ind{A})(x) = 2^{-n}$ if $A \cap \Gamma_n = \{x\}$ for some
$n$, and $\pi(\ind{A})(x) = 0$ otherwise, is a homeomorphism of
$\mathscr{A}_T$ and a weakly compact subset of $\czerok{\Gamma}$.
\end{proof}

Incidentally, using Proposition \ref{pseudotree}, we can provide
more examples of spaces $h_\mathscr{A}$ which fail the conditions of
Theorems \ref{adequate} and \ref{fullcharac} and do not contain
isomorphic copies of $\lpk{1}{\Gamma}$ for any uncountable set
$\Gamma$. Let $T$ be a non-special pseudotree with no uncountable
branches. Plenty of such objects exist; for example, the space
$\sigma\rat$ of well ordered subsets of $\rat$, partially ordered by
taking initial segments, satisfies these conditions. By Proposition
\ref{pseudotree}, $h_{K_{\mathscr{A}_T}}$ fails Theorems
\ref{adequate} and \ref{fullcharac}. To see that it does not contain
$\lpk{1}{\Gamma}$ if $\Gamma$ is uncountable, we refer the reader to
\cite[Theorem 1.7 and Proposition 3.10]{am:93} and \cite[Theorem
3.5]{amn:88}.

The following class of examples is based on a compact space
constructed in \cite{am:93}. This construction is shown to be
descriptive in \cite{or:04}. Here, we give an alternative proof
using our strictly increasing functions.

\begin{example}[cf {\cite[Example 4.5]{or:04}}]\label{adqnongulko}
Let $L = \setcomp{(\xi,\eta) \in \omega_1^2}{\xi < \eta}$ and
suppose that we have a function $\mapping{\Phi}{L}{\nat}$. Define
$$
W \;=\; \setcomp{\{\oneton{\xi}{n}\} \subseteq \wone}{\xi_1 < \ldots
< \xi_n \mbox{ and }\Phi(\xi_i,\xi_j) \geq j \mbox{ for all }i < j
\leq n < \omega}
$$
and
$$
\mathscr{A}_\Phi \;=\; \setcomp{A \subseteq \wone}{\mbox{every
finite subset of }A\mbox{ is in }W}.
$$
Then $W$ contains all singleton subsets of $\wone$ and is closed
under taking subsets, and $\mathscr{A}_\Phi$ is adequate. We show
that $K_{\mathscr{A}_\Phi}$ is a descriptive compact space.
\end{example}

\begin{proof}
If $\ind{A} \in K_{\mathscr{A}_\Phi}$ then $A$ cannot have order
type exceeding $\omega$. Indeed, otherwise, then we could extract
elements $\xi_1 < \ldots < \xi_n < \ldots < \xi_\omega \in A$, to
give $\{\xi_1,\ldots,\xi_n,\xi_\omega\} \in W$ and
$\Phi(\xi_1,\xi_\omega) \geq n+1$ for all $n$, which is impossible.

In order to construct a strictly increasing, $\tau_p$-lower
semicontinuous function
$\mapping{\rho}{K_{\mathscr{A}_\Phi}}{\real}$, we first define
$\mapping{\pi}{K_{\mathscr{A}_\Phi}}{\czerok{L}}$ by
$$
\pi(\ind{A})(\xi,\eta) \;=\; \left\{
\begin{array} {l@{\quad}l}
n^{-1} & \mbox{if } \xi,\eta \in  A \mbox{ and }\Phi(\xi,\eta) = n\\
0 & \mbox{otherwise.} \\
\end{array} \right.
$$
It is clear that $\pi(\ind{A}) \in \czerok{L}$ because there are
only finitely many $(\xi,\eta) \in L \cap A^2$ with $\Phi(\xi,\eta)
= n$. Indeed, as we have already observed, we can enumerate $A$ as a
strictly increasing sequence $(\xi_i)_{i < \alpha}$, where $\alpha
\leq \omega$. Thus, if $(\xi,\eta) \in L \cap A^2$ and
$\Phi(\xi,\eta) = n$ then $\xi = \xi_i$ and $\eta = \xi_j$ for some
$i < j$, so $\{\xi_1,\ldots,\xi_j\} \in W$ and $j \leq
\Phi(\xi_i,\xi_j) = n$. Evidently, if $A \subseteq B$ then
$\pi(\ind{A}) \leq \pi(\ind{B})$. It is also clear that if $B$
contains at least two elements and strictly contains $A$, then
$\pi(\ind{A})(\xi,\eta) = 0 < \pi(\ind{B})(\xi,\eta)$ for some
suitable $(\xi,\eta) \in L$. Finally, we observe that $\pi$ is
$\tau_p$-$\tau_p$ continuous.

We define our strictly increasing function $\rho$ by recalling Day's
norm $\normdot_{\mathrm{Day}}$ from the introduction and setting
$$
\rho(\ind{A}) \;=\; \left\{
\begin{array} {l@{\quad}l}
0 & \mbox{if } A = \varnothing\\
1 & \mbox{if } A \mbox{ is a singleton}\\
1 + \norm{\pi(\ind{A})}_{\mathrm{Day}} & \mbox{otherwise.} \\
\end{array} \right.
$$
Since $\normdot_{\mathrm{Day}}$ is $\tau_p$-lower semicontinuous and
lattice, $\rho$ is $\tau_p$-lower semicontinuous. Being strictly
convex, Day's norm is moreover strictly lattice, thus $\rho$ is
strictly increasing. It follows that $K_{\mathscr{A}_\Phi}$ is
descriptive by Theorem \ref{adequate}.
\end{proof}

The compact space $K$ is said to be {\em Gul'ko} if $\Ck{K}$ is
$\mathcal{K}${\em -countably determined} in its weak topology. We
say that $K$ is {\em Corson} if it is homeomorphic to a subset $M$
of $[0,1]^\Gamma$ in the pointwise topology, with the property that
the support of every element of $M$ is at most countable. See, for
example, \cite{ls:84,gru:87,amn:88,am:93,fabian:97} for information
about these classes of compact spaces. The implications Talagrand
$\Rightarrow$ Gul'ko $\Rightarrow$ Corson and descriptive are known
and strict. Since the order type of every $A \in \mathscr{A}_\Phi$
of Example \ref{adqnongulko} cannot exceed $\omega$, the associated
compact space $K_{\mathscr{A}_\Phi}$ is always Corson. Using a
particular function $\Phi$ defined in terms of a family of almost
disjoint subsets of $\omega$, the authors of \cite{am:93} show that
the associated space $K_{\mathscr{A}_\Phi}$ is not Gul'ko.

Our final collection of examples is based on a class of compact
spaces introduced in \cite{aam:08}. We shall say that a subset $I$
of a pseudotree $T$ is an {\em interval} of $T$ if $s \in I$
whenever $r,t \in I$ and $r \prec s \prec t$.

\begin{example}
\label{reznichenko} Let $A$ be a set and $(T_a,<_a)_{a \in A}$ a
family of special pseudotrees with the property that
\begin{enumerate}
\item[($**$)]if $I$ is an interval of $T_a$ and $T_b$ for distinct $a,b \in A$, then $\card{I} \leq 1$.
\end{enumerate}
Let $T = \bigcup_{a \in A} T_a$ and define
$$
\Omega \;=\; \setcomp{\ind{I} \in \{0,1\}^T}{I \mbox{ is an interval
of }T_a \mbox{ for some }a \in A}.
$$
Using ($**$), it is straightforward to prove that $\Omega$ is
$\tau_p$-compact. We show moreover that $\Omega$ is descriptive.
\end{example}

\begin{proof}
Since each $T_a$ is special, we can take a sequence $(A_{a,n})_{n =
1}^\infty$ of pairwise disjoint antichains of $T_a$ such that $T_a =
\bigcup_{n=1}^\infty A_{a,n}$. If $I \in \Omega$ has at least two
elements then there is a unique $a_I \in A$ such that $I \subseteq
\Omega_{a_I}$. Thus we can define
$$
\rho(\ind{I}) \;=\; \left\{
\begin{array} {l@{\quad}l}
0 & \mbox{if } I = \varnothing\\
1 & \mbox{if } I \mbox{ is a singleton}\\
1 + \sum\setcomp{2^{-n}}{I \cap A_{a_I,n} \neq \varnothing} & \mbox{otherwise.} \\
\end{array} \right.
$$
It is evident that $\rho$ is $\tau_p$-lower semicontinuous. It
remains to prove that $\rho$ satisfies property (\ref{rhocondition})
of Lemma \ref{master}. Suppose that $\ind{J} < \ind{I}$. If $I$ is
the singleton $\{t\}$ then set $\alpha = \frac{1}{2}$. Otherwise $I
\subseteq \Omega_{a_I}$. In this case take $t \in I\backslash J$ and
set $\alpha = \rho(\ind{I}) - 2^{-(m+1)}$, where $m$ is the unique
natural number satisfying $t \in A_{a_I,m}$. Note that $\alpha > 1$.
In both cases, define $U = \setcomp{\ind{R} \in \Omega}{t \notin
R}$. It is straightforward to verify that $\alpha$ and $U$ fulfil
property (\ref{rhocondition}) of Lemma \ref{master}.
\end{proof}

The authors of \cite{aam:08} use families of trees $(T_a,<_a)_{a \in
A}$ satisfying ($**$) of Example \ref{reznichenko} to produce
several examples of compact spaces $\Omega$ in this way, including a
non-Gul'ko space. Every tree considered in \cite{aam:08} has height
$\omega$, so is certainly special. If each $(T_a,<_a)$ is special
then none contain uncountable intervals, so the corresponding
compact space $\Omega$ is Corson. From above, it follows that
$\Omega$ is also descriptive. In particular, we have another example
of a Corson, descriptive, non-Gul'ko space.

\bibliographystyle{amsplain}

\end{document}